\newcommand{\R}{{\mathbb R}}
\def\HH{{\mathcal H}}
\def\UU{{\mathcal U}}
\def\PP{{\mathcal P}}
\def\II{{\mathcal I}}
\def\JJ{{\mathcal J}}
\def\AA{{\mathcal A}}
\def\LL{{\mathcal L}}
\def\CC{{\mathcal C}}
\def\EE{{\mathcal E}}
\def\TT{{\mathcal T}}
\def\YY{{\mathcal Y}}
\def\SuccMap{{\mathcal S}}
\DeclareMathOperator{\Rot}{Rot}
\DeclareFontFamily{U}{mathx}{}
\DeclareFontShape{U}{mathx}{m}{n}{<-> mathx10}{}
\DeclareSymbolFont{mathx}{U}{mathx}{m}{n}
\DeclareMathAccent{\bigcheck}{0}{mathx}{"71}
\DeclareMathAccent{\bighat}{0}{mathx}{"70}
\newtheorem{theorem}{Theorem}[section]
\newtheorem*{theorem*}{Theorem}
\newtheorem{lemma}[theorem]{Lemma}
\newtheorem{proposition}[theorem]{Proposition}
\newtheorem{corollary}[theorem]{Corollary}
\theoremstyle{definition}
\newtheorem{remark}[theorem]{Remark}
\definecolor{interno}{rgb}{0.85, 0.85, 0.85}
\definecolor{palla}{rgb}{0.7,0.7,0.7}
\definecolor{FFS-1}{rgb}{0.85, 0.85, 0.85}
\definecolor{FFS-2}{rgb}{0.7,0.7,0.7}
\numberwithin{equation}{section}
\begin{document}

\title[A Poincar\'e--Birkhoff theorem for multivalued successor maps]{A Poincar\'e--Birkhoff theorem \\for multivalued successor maps with applications \\to periodic superlinear Hamiltonian systems}

\author[G.~Feltrin]{Guglielmo Feltrin}

\address{
Dipartimento di Scienze Matematiche, Informatiche e Fisiche\\
Universit\`a degli Studi di Udine\\
Via delle Scienze 206, 33100 Udine, Italy}

\email{guglielmo.feltrin@uniud.it}

\author[A.~Fonda]{Alessandro Fonda}

\address{
Dipartimento di Matematica, Informatica e Geoscienze\\
Universit\`a degli Studi di Trieste\\
P.le Europa 1, 34127 Trieste, Italy}

\email{a.fonda@units.it}

\author[A.~Sfecci]{Andrea Sfecci}

\address{
Dipartimento di Matematica, Informatica e Geoscienze\\
Universit\`a degli Studi di Trieste\\
P.le Europa 1, 34127 Trieste, Italy}

\email{asfecci@units.it}

\subjclass{34B08, 34C25, 37J46.}

\keywords{Poincar\'e--Birkhoff theorem, successor map, Hamiltonian system, periodic problem, superlinear equation.}

\date{}

\begin{abstract}
We provide a new version of the Poincar\'e--Birkhoff theorem for possibly multivalued successor maps associated with planar non-autonomous Hamiltonian systems. As an application, we prove the existence of periodic and subharmonic solutions of the scalar second order equation $\ddot x + \lambda g(t,x) = 0$, for $\lambda>0$ sufficiently small, with $g(t,x)$ having a superlinear growth at infinity, without requiring the existence of an equilibrium point.
\end{abstract}

\maketitle

\section{Introduction}\label{section-1}

In the search for periodic solutions to a differential system of ordinary differential equations, a classical approach is to look for the existence of fixed points of the associated \textit{Poincar\'e map}. 
It is worth noting, however, that there is a less known alternative strategy for planar systems, i.e., searching for fixed points of the so-called \textit{successor map}. This map already appears in \cite{Al-68}, even if the main idea may be traced back to Poincar\'e himself \cite{Po-BOOK}.

Consider, for instance, the planar system 
\begin{equation}\label{syst-fg}
\begin{cases}
\, \phantom{-}\dot x=f(t,x,y),
\\
\, -\dot y=g(t,x,y),
\end{cases}
\end{equation}
where $f,g \colon\mathbb{R}\times\mathbb{R}^2\to\mathbb{R}$ are continuous functions.
Assuming for the moment the uniqueness of the solution for initial value problems, we recall that, given $T>0$, the Poincar\'e map $\mathcal P_T \colon \R^2 \to \R^2$ associates to every $z_0=(x_0,y_0)$ the point $z(T)$, where $z=(x,y)$ is the solution of \eqref{syst-fg} such that $z(0)=z_0$.

In order to introduce the successor map let us first recall the definition of \textit{rotation number}.
Given a solution $z=(x,y)$ of \eqref{syst-fg}, defined on an interval $\mathopen{[}\alpha,\beta\mathclose{]}$ and satisfying $z(t)\neq0$ for every $t\in\mathopen{[}\alpha,\beta\mathclose{]}$, we can 
introduce its modified polar coordinates
\begin{equation}\label{modpolcor}
x= \sqrt{2\rho} \sin \theta, \qquad
y= \sqrt{2\rho} \cos \theta,
\end{equation}
and set
\begin{equation*}
\Rot(z,[\alpha,\beta])= \dfrac{1}{2\pi}\big(\theta(\beta)-\theta(\alpha)\big).
\end{equation*}

Let us now assume that every solution $z=(x,y)$ of \eqref{syst-fg}, with initial conditions
\begin{equation*}
x(t_0)=0, \quad y(t_0)=y_0>0,
\end{equation*}
is unique and has the following properties:
there exists $t_1>t_0$ such that $z$ is defined on the whole interval $\mathopen{[}t_0,t_1\mathclose{]}$ with $z(t)\neq0$ for every $t\in\mathopen{[}t_0,t_1\mathclose{]}$,
$z(t_1)=(0,y_1)$ with $y_1>0$,
and $\Rot(z,[t_0,t_1])=1$.
The successor map is defined as 
\begin{equation}\label{SM-def}
\SuccMap \colon \R\times\mathopen{]}0,+\infty\mathclose{[}\to \mathbb{R}\times\mathopen{]}0,+\infty\mathclose{[}, 
\qquad \SuccMap(t_0,y_0) =(t_1,y_1).
\end{equation}
Hence, denoting the components of $\mathcal S$ by $(\mathcal{T},\mathcal{Y})$, we have
\begin{equation}\label{SY}
\mathcal{T}(t_0,y_0)=t_1,
\quad
\mathcal{Y}(t_0,y_0)=y_1.
\end{equation}

The successor map can thus be seen as a kind of {\em first return map} on the half-plane $\{(t,x,y) \colon t\in \R, \, x=0, \, y>0\}$. Notice that the Poincar\'e map could also be seen as a first return map in the case of $T$-periodic systems after identifying the sections $\{(t,x,y)\colon t=0\}$ and $\{(t,x,y)\colon t=T\}$.

It could happen, in some situations, that the successor map is well-defined, while the Poincar\'e map is not. A typical example arises when dealing with scalar second order differential equations with a nonlinearity having a superlinear growth, when the global existence of the solutions is not guaranteed (see, e.g., \cite{CoUl-67}). Variants of the above described situation have been considered, e.g., in equations with singularities or when dealing with bouncing solutions. We refer to \cite{Ja-76, KuOr-11, Or-96, Or-01, Or-11, QiTo-04, QiTo-05}, where different applications of the successor map can be found.

In this paper we are mainly interested in Hamiltonian systems. Assuming the Hamiltonian function to be of class $\mathcal{C}^1$ in all the three variables, we show how to deal with a possible multivalued successor map. This situation arises when the uniqueness property of the solutions of initial value problems associated with system \eqref{syst-fg} is not assumed. 

In Section~\ref{section-2} we show that, thanks to a suitable symplectic change of variables, the successor map can be interpreted as the Poincar\'e map of an equivalent Hamiltonian system.
In this way, in Section~\ref{section-3}, we are able to apply a variant of the Poincar\'e--Birkhoff theorem \cite{FoUr-17} where the above mentioned uniqueness property is not required.
Whether the regularity hypothesis on the Hamiltonian function could be weakened by assuming only the $\mathcal{C}^1$-smoothness in the $(x,y)$ variables remains an open question; in such a case, a different approach would probably be needed. Section~\ref{section-4} is devoted to presenting a set of minimal hypotheses ensuring that the successor map associated with a general planar system is well defined.

In Section~\ref{section-5} we provide an application of our version of the Poincar\'e--Birkhoff theorem to the periodic problem associated with the scalar second order equation
\begin{equation}\label{ela}
\ddot x + \lambda g(t,x) = 0,
\end{equation}
where $g$ is continuous, $T$-periodic in $t$, and satisfies the superlinear growth condition
\begin{equation*}
\lim_{x\to \pm \infty} \frac{g(t,x)}{x} = +\infty, 
\quad \text{uniformly in $t$.}
\end{equation*}
Under these assumptions, we prove the existence of an arbitrarily large number of periodic solutions provided that $\lambda>0$ is sufficiently small. More precisely, as a consequence of Theorem~\ref{THM_lambda}, we have the following.

\begin{corollary}\label{cor-intro}
For any integer $N$, there exists $\lambda_N>0$ such that, for every $\lambda \in\mathopen{]}0,\lambda_N\mathclose{]}$, equation \eqref{ela} has at least $N$ periodic solutions.
\end{corollary}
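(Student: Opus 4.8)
The plan is to derive Corollary~\ref{cor-intro} from Theorem~\ref{THM_lambda}, whose statement provides, for a fixed winding-number range, periodic solutions of \eqref{ela} once $\lambda$ is below a threshold depending on that range. First I would reformulate \eqref{ela} as the planar Hamiltonian system $\dot x = y$, $-\dot y = \lambda g(t,x)$, and observe that the superlinear hypothesis on $g$ forces large-amplitude solutions to spiral around the origin in the $(x,y)$-plane with a rotation number that grows without bound as the amplitude increases — this is the classical mechanism behind superlinear Poincar\'e--Birkhoff results. The smallness of $\lambda$ enters in the opposite direction: on any fixed compact annulus around the origin the vector field is small, so solutions starting there make an arbitrarily small rotation over $[0,T]$. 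Together these two facts produce, for each prescribed integer $k$, an annulus on whose inner boundary the rotation number is less than $k$ and on whose outer boundary it exceeds $k$, which is exactly the twist condition required by the Poincar\'e--Birkhoff theorem of Section~\ref{section-3} applied to the (possibly multivalued) successor map.

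Next I would make precise how Theorem~\ref{THM_lambda} packages this: for $N$ given, one wants $N$ distinct rotation numbers, say $k=1,\dots,N$ (or some consecutive block), each yielding at least one $T$-periodic solution with that exact rotation number, hence $N$ geometrically distinct periodic solutions. The rotation number being an integer-valued invariant along $T$-periodic orbits distinguishes them. So the corollary follows by choosing $\lambda_N$ to be the threshold that Theorem~\ref{THM_lambda} associates with the range $\{1,\dots,N\}$: for $\lambda\in\mathopen{]}0,\lambda_N\mathclose{]}$ the twist hypothesis holds simultaneously for all $k\le N$, and the theorem delivers the solutions. I would spell out that the dependence $\lambda_N$ is monotone in the sense needed, and that no equilibrium point is invoked anywhere — the inner estimate uses only smallness of the field on a fixed annulus, not a rest point.

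The main obstacle I expect is the quantitative control of the rotation number from below on the outer boundary while keeping the estimate uniform in $\lambda$ in the right way — i.e.\ showing that for each $k$ there is a radius $R_k$, independent of $\lambda$ once $\lambda$ is small, such that solutions starting at radius $R_k$ wind at least $k$ times, or alternatively absorbing the $\lambda$-dependence into $R_k$ consistently. This is where the superlinear growth is used in earnest: one typically compares \eqref{ela} with an autonomous superlinear equation, uses a time-rescaling to absorb $\lambda$, and invokes a Lemma (presumably in Section~\ref{section-5}) giving lower bounds on the number of half-oscillations. A secondary technical point is ensuring the successor map is well-defined on the relevant region despite possible non-uniqueness and non-global existence of solutions — but that is precisely what Section~\ref{section-4} guarantees under the superlinear sign condition, so I would cite it rather than reprove it. Once these estimates are in hand, the passage from Theorem~\ref{THM_lambda} to Corollary~\ref{cor-intro} is immediate.
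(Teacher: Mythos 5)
Your proposal is correct and takes essentially the same route as the paper: Corollary~\ref{cor-intro} is an immediate consequence of Theorem~\ref{THM_lambda}, obtained by selecting $N$ pairs $(m,k)$ (for instance $k=1$ and $m=1,\dots,N$), noting that the resulting $T$-periodic solutions are pairwise distinct because they have exactly $2m$ simple zeros in $\mathopen{[}0,T\mathclose{[}$, and setting $\lambda_N=\min_{1\le m\le N}\lambda_{m,1}$. Two small remarks: in the paper's notation $m$ is the rotation count and $k$ the period multiplier, so your phrase ``$N$ distinct rotation numbers, say $k=1,\dots,N$'' should read $m=1,\dots,N$; and most of your discussion re-derives the twist mechanism and the superlinear/smallness estimates that are already packaged inside Theorem~\ref{THM_lambda}, which need not be reproved to deduce the corollary.
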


Let us recall that the existence of an infinite number of periodic solutions for the equation $\ddot x +g(t,x)=0$, with the above superlinear growth, was first provided by Jacobowitz in \cite{Ja-76} and Hartman in \cite{Ha-77}, assuming the uniqueness of the solutions to initial value problems and that $x=0$ is an equilibrium (see also \cite{FoSf-16} for a higher dimensional generalization). In our Theorem~\ref{THM_lambda} we do not need these assumptions. We also refer, e.g., to \cite{BaBe-84,Bo-12,DiZa-92,FoMaZa-93,FuLo-75,Gi-preprint,Gi-23,Mo-65} for some related results.

\section{The successor map for Hamiltonian systems}\label{section-2}

Let us consider the planar Hamiltonian system
\begin{equation*}
J\dot z=\nabla_z H(t,z), 
\quad \text{where $J=\begin{pmatrix}
0 & -1\\
1 & 0
\end{pmatrix}$,}
\end{equation*}
ruled by a Hamiltonian function $H\colon \mathbb{R}\times\mathbb{R}^2\to \mathbb{R}$ of class $\mathcal{C}^1$, or equivalently, writing $z=(x,y)$,
\begin{equation}\label{HamSyst}
\begin{cases}
\phantom{-}\dot x = \partial_y H(t,x,y),
\\
-\dot y = \partial_x H(t,x,y).
\end{cases}
\end{equation}
In addition, we assume the following two conditions
\begin{enumerate}[leftmargin=30pt,labelsep=8pt,label=\textup{$(H_{1})$}]
\item $\langle \nabla_z H(t,z),z \rangle >0$, for every $(t,z)\in\mathbb{R}\times(\mathbb{R}^{2}\setminus\{0\})$,
\label{posit}
\end{enumerate}
\begin{enumerate}[leftmargin=30pt,labelsep=8pt,label=\textup{$(H_{2})$}]
\item $\displaystyle \lim_{|z|\to +\infty} H(t,z)=+\infty$, uniformly in $t\in\mathbb{R}$.
\label{susu}
\end{enumerate}
From \ref{posit} we deduce that
\begin{equation*}
\nabla_z H(t,0)=0, \quad \text{for every $t\in\mathbb{R}$,}
\end{equation*}
i.e., the origin is an equilibrium.

\begin{remark}\label{rem-2.4}
The hypothesis \ref{posit} may appear too restrictive for the applications, where usually the inequality holds only for large values of $|z|$.
However, this difficulty can be often overcome in practice.
For instance, when looking for $T$-periodic solutions in such a situation, one could try to modify the system in a neighborhood of the origin so to recover \ref{posit}. Then, once these $T$-periodic solutions are found, it will be necessary to prove that they lie in the region where the system has not been modified, so that they indeed solve the original system. This strategy will be used, for instance, in Section~\ref{section-5}.
\hfill$\lhd$
\end{remark}

\begin{remark}\label{increasing}
Assumption \ref{posit} implies that, for every $v\in\mathbb{R}^{2}\setminus\{0\}$ and every $t\in\mathbb{R}$, the function $\lambda \mapsto H(t,\lambda v)$ is strictly increasing in $\lambda\in\mathopen{[}0,+\infty\mathclose{[}$.
\hfill$\lhd$
\end{remark}

Without loss of generality, we can assume
\begin{equation}\label{zeroatzero}
H(t,0)=0, \quad \text{for every $t\in\mathbb{R}$.}
\end{equation}

\subsection{From times to angles}\label{section-2.1}

The change of variables \eqref{modpolcor} leads to the Hamiltonian system
\begin{equation*}
J\dot \zeta=\nabla_\zeta \HH(t,\zeta),
\end{equation*}
that is, writing $\zeta=(\theta,\rho)$,
\begin{equation}\label{HamSyst-H}
\begin{cases}
\phantom{-}\dot\theta = \partial_\rho \HH(t,\theta,\rho),
\\
-\dot\rho = \partial_\theta \HH(t,\theta,\rho).
\end{cases}
\end{equation}
It is ruled by the Hamiltonian $\HH \colon \UU\to \mathbb{R}$, with $\UU=\mathbb{R}\times\mathbb{R}\times\mathopen{]}0,+\infty\mathclose{[}$, defined as
\begin{equation}\label{HHH}
\HH(t,\theta,\rho)= H\bigl(t, \sqrt{2\rho} \sin \theta, \sqrt{2\rho} \cos \theta\bigr).
\end{equation}
Notice that $\HH$ is $2\pi$-periodic in the variable $\theta$.
A simple computation gives
\begin{align*}
\dot\theta&=\frac{\dot xy-x\dot y}{x^2+y^2} = \frac{x\partial_x H+y\partial_y H}{x^2+y^2} = \frac{\langle \nabla_z H , z \rangle}{|z|^2},
\\
\dot\rho &= x\dot x+y\dot y= x \partial_y H -y\partial_x H = \langle \nabla_z H , Jz \rangle.
\end{align*}

In order to perform another change of variables, we introduce the map $\Psi$.

\begin{proposition}\label{prop-map-Psi}
There is a function $\Psi=\Psi(\vartheta,\tau,h) \colon \UU\to \mathbb{R}$ of class $\mathcal{C}^1$, $2\pi$-periodic in the variable~$\vartheta$, such that
\begin{align}
\label{id1}
\Psi(\theta,t,\HH(t,\theta,\rho))=\rho,
&\quad \text{for every $(t,\theta,\rho)\in\UU$,}
\\
\label{id2}
\HH(\tau,\vartheta,\Psi(\vartheta,\tau,h))=h,
&\quad \text{for every $(\vartheta,\tau,h)\in\UU$.}
\end{align}
Moreover, if the Hamiltonian function $H$ is $T$-periodic in the variable $t$, then the function $\Psi$ is $T$-periodic in the variable $\tau$.
\end{proposition}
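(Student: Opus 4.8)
The plan is to show that, for each fixed pair $(\vartheta,\tau)$, the map $\rho\mapsto\HH(\tau,\vartheta,\rho)$ is a strictly increasing $\mathcal{C}^1$-diffeomorphism of $\mathopen{]}0,+\infty\mathclose{[}$ onto itself, and then simply to \emph{define} $\Psi(\vartheta,\tau,h)$ as its inverse; with this definition \eqref{id2} holds by construction, while \eqref{id1} follows at once from injectivity (given $(t,\theta,\rho)\in\UU$, the value $\rho$ is the unique solution of $\HH(t,\theta,\cdot)=\HH(t,\theta,\rho)$, hence equals $\Psi(\theta,t,\HH(t,\theta,\rho))$).

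First I would compute, from \eqref{HHH} and the chain rule,
\[
\partial_\rho\HH(t,\theta,\rho)=\frac{\langle\nabla_z H(t,z),z\rangle}{|z|^2}\bigg|_{z=(\sqrt{2\rho}\sin\theta,\,\sqrt{2\rho}\cos\theta)},
\]
which by \ref{posit} is strictly positive on all of $\UU$; hence $\rho\mapsto\HH(\tau,\vartheta,\rho)$ is strictly increasing and of class $\mathcal{C}^1$ for every fixed $(\tau,\vartheta)$. Next I would identify its range. Writing $v_\vartheta=(\sin\vartheta,\cos\vartheta)$, we have $\HH(\tau,\vartheta,\rho)=H(\tau,\sqrt{2\rho}\,v_\vartheta)$; by Remark~\ref{increasing} and \eqref{zeroatzero} this is $>H(\tau,0)=0$, and by continuity of $H$ it tends to $H(\tau,0)=0$ as $\rho\to0^+$, while $|\sqrt{2\rho}\,v_\vartheta|=\sqrt{2\rho}\to+\infty$ forces $\HH(\tau,\vartheta,\rho)\to+\infty$ by \ref{susu}. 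Being continuous and strictly increasing, $\rho\mapsto\HH(\tau,\vartheta,\rho)$ is thus a bijection of $\mathopen{]}0,+\infty\mathclose{[}$ onto $\mathopen{]}0,+\infty\mathclose{[}$, so its inverse $\Psi$ is a well-defined function on $\R\times\R\times\mathopen{]}0,+\infty\mathclose{[}=\UU$.

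It remains to upgrade $\Psi$ to class $\mathcal{C}^1$ and to transfer the periodicities. Regularity follows from the implicit function theorem applied to the $\mathcal{C}^1$ map $(\vartheta,\tau,h,\rho)\mapsto\HH(\tau,\vartheta,\rho)-h$ at any point of its zero set, since the partial derivative with respect to $\rho$ is $\partial_\rho\HH>0$; this makes $\Psi$ locally $\mathcal{C}^1$, hence $\mathcal{C}^1$ on all of $\UU$ because the inverse is unique. The $2\pi$-periodicity of $\HH$ in $\theta$ yields, again by uniqueness of the inverse, the $2\pi$-periodicity of $\Psi$ in $\vartheta$; and if $H$ is $T$-periodic in $t$, then so is $\HH$, whence $\Psi$ is $T$-periodic in $\tau$. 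The only point genuinely requiring care is the identification of the range of $\rho\mapsto\HH(\tau,\vartheta,\rho)$ — the limit $0$ as $\rho\to0^+$, which rests on \eqref{zeroatzero} and the continuity of $H$, and the limit $+\infty$, which rests on \ref{susu}; everything else is routine.
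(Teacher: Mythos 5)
Your argument is correct and follows essentially the same route as the paper's: strict monotonicity of $\rho\mapsto\HH(\tau,\vartheta,\rho)$ from \ref{posit} (you compute $\partial_\rho\HH$ directly, the paper invokes Remark~\ref{increasing} — same content), surjectivity onto $\mathopen{]}0,+\infty\mathclose{[}$ from \ref{susu} and \eqref{zeroatzero}, $\mathcal{C}^1$-regularity via the implicit function theorem, and periodicities by uniqueness of the inverse. The only cosmetic difference is that the paper writes out the periodicity verifications as short chains of equalities, whereas you summarize them; otherwise the two proofs coincide.
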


\begin{proof}
By Remark~\ref{increasing}, we get
\begin{equation*}
\partial_\rho \HH(t,\theta,\rho)>0,
\quad
\text{for every $(t,\theta,\rho)\in\UU$.}
\end{equation*}
Hence, the existence of $\Psi$ follows by an application of the implicit function theorem. 
Since \ref{posit}, \ref{susu} and \eqref{zeroatzero} hold, the function $\Psi(\theta,t,\cdot)\colon \mathopen{]}0,+\infty\mathclose{[} \to \mathopen{]}0,+\infty\mathclose{[}$ is a bijection for every $(\theta,t)\in\mathbb{R}^2$. Hence $\Psi$ is defined on $\UU$.
Then, for every $(\vartheta,\tau,h)\in\UU$ we can find $\rho>0$ such that $\HH(\tau,\vartheta,\rho)=h$, so that
\begin{align*}
\Psi(\vartheta,\tau,h)&=
\Psi(\vartheta,\tau,\HH(\tau,\vartheta,\rho)) = \rho 
=\Psi(\vartheta+2\pi,\tau,\HH(\tau,\vartheta+2\pi,\rho))
\\
&=\Psi(\vartheta+2\pi,\tau,\HH(\tau,\vartheta,\rho))
=\Psi(\vartheta+2\pi,\tau,h),
\end{align*}
and, if $H$ is $T$-periodic in $t$ (and so also $\HH$),
\begin{align*}
\Psi(\vartheta,\tau,h)
&=\Psi(\vartheta,\tau,\HH(\tau,\vartheta,\rho)) = \rho 
=\Psi(\vartheta,\tau+T,\HH(\tau+T,\vartheta,\rho))
\\
&=\Psi(\vartheta,\tau+T,\HH(\tau,\vartheta,\rho))
=\Psi(\vartheta,\tau+T,h).
\end{align*}
The previous computations
provide the periodicity properties in the statement.
\end{proof}

Let $\Psi$ be as in Proposition~\ref{prop-map-Psi}. We can introduce the Hamiltonian system
\begin{equation*}
Jw'=\nabla_w \Psi(\vartheta,w),
\end{equation*}
that is,  writing $w=(\tau,h)$,
\begin{equation}\label{HamSyst-Psi}
\begin{cases}
\phantom{-}\tau'
= \partial_h \Psi(\vartheta,\tau,h),
\\
-h'
= \partial_\tau \Psi(\vartheta,\tau,h).
\end{cases}
\end{equation}
Let us consider a solution $\zeta=(\theta,\rho) \colon \mathcal I \to \mathbb{R}\times \mathopen{]}0,+\infty\mathclose{[}$ of system \eqref{HamSyst-H}. It is well-known that
\begin{equation}\label{Hder}
\dfrac{\mathrm{d}}{\mathrm{d}t}\HH(t,\theta(t),\rho(t))=\partial_t \HH(t,\theta(t),\rho(t)).
\end{equation}
By \ref{posit}, the function $\theta \colon\II\to\JJ$ has an inverse function $\tau \colon\JJ\to \II$. We define $h \colon\JJ\to\mathbb{R}$ as
\begin{equation}\label{hdef}
h(\vartheta)=\HH(\tau(\vartheta),\vartheta,\rho(\tau(\vartheta)))= [\HH(\cdot,\theta(\cdot),\rho(\cdot))\circ \tau](\vartheta).
\end{equation}
We now prove that, if $(\theta,\rho)$ is a solution of \eqref{HamSyst-H}, then the couple $(\tau,h)$ solves \eqref{HamSyst-Psi}.
Indeed, deriving \eqref{id2} with respect to the variable $h$ we get
\begin{equation*}
\partial_\rho \HH(\tau,\vartheta,\Psi(\vartheta,\tau,h)) \cdot \partial_h\Psi(\vartheta,\tau,h) =1.
\end{equation*}
Hence, recalling \eqref{HamSyst-H},
\begin{equation}\label{AA2}
\tau'(\vartheta)
= \frac{1}{\dot \theta(\tau(\vartheta))} =  \frac{1}{\partial_\rho \HH(\tau(\vartheta),\vartheta,\Psi(\vartheta,\tau(\vartheta),h(\vartheta)))} 
= \partial_h\Psi(\vartheta,\tau(\vartheta),h(\vartheta)),
\end{equation}
so that the first equation in \eqref{HamSyst-Psi} is satisfied.
Moreover, differentiating in \eqref{id1} with respect to the variable $t$, we get
\begin{equation}\label{BB1}
    \partial_\tau \Psi(\theta,t,\HH(t,\theta,\rho))
    + \partial_h \Psi(\theta,t,\HH(t,\theta,\rho)) \cdot
    \partial_t \HH(t,\theta,\rho) = 0.
\end{equation}
Then, recalling in the order \eqref{hdef}, \eqref{AA2}, \eqref{Hder}, and \eqref{BB1}, we deduce
\begin{align*}
    h'(\vartheta)
    &
    = 
    \left[ \frac{1}{\dot \theta(t)} \cdot \frac{\mathrm{d}}{\mathrm{d}t} \HH(t,\theta(t),\rho(t))\right]_{t=\tau(\vartheta)}\\
    &=\partial_h \Psi(\vartheta,\tau(\vartheta),h(\vartheta))\cdot
    \partial_t    \HH(\tau(\vartheta),\vartheta,\rho(\tau(\vartheta)))
    \\
    &=\partial_h \Psi(\vartheta,\tau(\vartheta),\HH(\tau(\vartheta),\vartheta,\rho(\tau(\vartheta))))\cdot
    \partial_t    \HH(\tau(\vartheta),\vartheta,\rho(\tau(\vartheta)))\\
    &=-\partial_\tau \Psi(\vartheta,\tau(\vartheta),\HH(\tau(\vartheta),\vartheta,\rho(\tau(\vartheta)))) = -\partial_\tau \Psi(\vartheta,\tau(\vartheta),h(\vartheta)),
\end{align*}
thus proving the validity of the second equation in \eqref{HamSyst-Psi}.

Analogously one can prove that, if $w=(\tau,h)$ is a solution of system \eqref{HamSyst-Psi}, with $h>0$, 
then $\zeta=(\theta,\rho)$  is a solution of \eqref{HamSyst-H}, taking $\theta$ as the inverse of $\tau$ and $\rho(t)=
\Psi(\theta(t),t,h(\theta(t)))$.

\subsection{Successor map vs. Poincar\'e map}\label{section-2.4}

We now assume that the successor map $\SuccMap$ for the Hamiltonian system \eqref{HamSyst} is well-defined and show that it corresponds to the Poincar\'e map for system \eqref{HamSyst-Psi}.
To this aim, in order to avoid cumbersome notation, we provisionally assume
that for every $(t_0,x_0,y_0)\in\mathbb{R}\times\mathbb{R}^{2}$ the Cauchy problem
\begin{equation*}
\begin{cases}
\, \dot x = \partial_y H(t,x,y),
&
-\dot y = \partial_x H(t,x,y),
\\
\, x(t_0)=x_0,
&
y(t_0)=y_0,
\end{cases}
\end{equation*}
has a unique solution, denoted by 
\begin{equation*}
z(t;t_0,x_0,y_0)=\bigl( x(t;t_0,x_0,y_0),y(t;t_0,x_0,y_0)\bigr).
\end{equation*}

Let us also consider the Cauchy problem associated with system \eqref{HamSyst-Psi}, i.e.,
\begin{equation*}
\begin{cases}
\, \tau'
    = \partial_h \Psi(\vartheta,\tau,h), &
    -h'
    = \partial_\tau \Psi(\vartheta,\tau,h),  
    \\
\, \tau(\vartheta_0)=\tau_0, &
    \phantom{-} h(\vartheta_0)=h_0,
\end{cases}
\end{equation*}
and denote its solution by
\begin{equation*}
w(\vartheta;\vartheta_0, \tau_0,h_0)
=
\bigl(
h(\vartheta;\vartheta_0, \tau_0,h_0),
\tau(\vartheta;\vartheta_0, \tau_0,h_0)
\bigr).
\end{equation*}
The Poincar\'e map $\PP_{2\pi} \colon \Omega \to \mathbb{R}^2$ is defined as
\begin{equation*}
\PP_{2\pi}(\tau_0,h_0) = w(2\pi; 0,\tau_0,h_0),
\end{equation*}
where $\Omega$ is the subset of $\mathbb{R}^2$ containing those points $(\tau_0,h_0)$ such that the solution $w(\,\cdot\,;0, \tau_0,h_0)$ is defined at least in the interval $[0,2\pi]$.

By the above arguments, we can verify that the trajectory $w(\vartheta;0, \tau_0,h_0)$ of system \eqref{HamSyst-Psi} corresponds to the trajectory $z(t;t_0,0,y_0)$ of system \eqref{HamSyst} with
\begin{equation*}
t_0=\tau_0
\quad\mbox{and}\quad
y_0=\Psi(0,\tau_0,h_0).
\end{equation*}
In particular, since we are assuming that the successor map $\SuccMap$ is well-defined, we have $\Omega=\mathbb{R}^2$ and, recalling the notation in \eqref{SY},
\begin{equation*}
\PP_{2\pi}(t_0,h_0)= \bigl(\TT\big(t_0,\Psi(0,t_0,h_0)\bigr),
\HH\big(t_1,2\pi,\YY(t_0,\Psi(0,t_0,h_0))\bigr)\bigr).
\end{equation*}

Summing up, we have the following.

\begin{proposition}\label{prop-2.3}
The equivalence
\begin{equation*}
(t_1,y_1)=\SuccMap(t_0,y_0) \iff 
(t_1,h_1)=\PP_{2\pi}(t_0,h_0)
\end{equation*}
holds, with $h_0=\HH(t_0,0,y_0)$ and $h_1=\HH(t_1,0,y_1)$.
\end{proposition}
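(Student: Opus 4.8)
The plan is to deduce the equivalence from the two (reversible) changes of variables performed in Section~\ref{section-2.1}: the passage to modified polar coordinates \eqref{modpolcor}, which sends a solution of \eqref{HamSyst} to one of \eqref{HamSyst-H}, followed by the exchange of the roles of time and angle through the map $\Psi$ of Proposition~\ref{prop-map-Psi}, which sends a solution of \eqref{HamSyst-H} to one of \eqref{HamSyst-Psi}. Both steps work in both directions, so the proof reduces to translating the conditions that characterize $\SuccMap(t_0,y_0)$ into conditions on the corresponding solution of \eqref{HamSyst-Psi}, recognizing these as the conditions that define $\PP_{2\pi}(t_0,h_0)$, and conversely.

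For the forward implication, I would start from a solution $z=(x,y)$ of \eqref{HamSyst} realizing $(t_1,y_1)=\SuccMap(t_0,y_0)$: it is defined on $[t_0,t_1]$, does not vanish there, $z(t_0)=(0,y_0)$, $z(t_1)=(0,y_1)$ with $y_0,y_1>0$, and $\Rot(z,[t_0,t_1])=1$. Writing $z$ in the coordinates \eqref{modpolcor} is admissible since $z(t)\neq0$ gives $\rho(t)>0$; moreover $\dot\theta=\langle\nabla_z H,z\rangle/|z|^2>0$ by \ref{posit}, so $\theta$ is strictly increasing, and since $x=0<y$ at both endpoints the values $\theta(t_0),\theta(t_1)$ lie in $2\pi\mathbb{Z}$, so that after the normalization $\theta(t_0)=0$ the condition $\Rot(z,[t_0,t_1])=1$ forces $\theta(t_1)=2\pi$. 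By the correspondence in Section~\ref{section-2.1}, the inverse $\tau=\theta^{-1}$ together with the function $h$ of \eqref{hdef} solves \eqref{HamSyst-Psi} on $\theta([t_0,t_1])=[0,2\pi]$, with $\tau(0)=t_0$ and $\tau(2\pi)=t_1$; by \eqref{hdef} the endpoint values $h(0)=\HH(t_0,0,\rho(t_0))$ and $h(2\pi)=\HH(t_1,2\pi,\rho(t_1))$ are the energies of $z$ at $t_0$ and $t_1$, i.e.\ $h_0$ and $h_1$. Hence the solution of \eqref{HamSyst-Psi} issuing from $(t_0,h_0)$ at $\vartheta=0$ is defined on $[0,2\pi]$, so that $(t_0,h_0)$ lies in the domain of $\PP_{2\pi}$, and $\PP_{2\pi}(t_0,h_0)=(\tau(2\pi),h(2\pi))=(t_1,h_1)$.

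The converse implication is symmetric; for it I would run through the inverse changes of variables. Given $(t_1,h_1)=\PP_{2\pi}(t_0,h_0)$, let $w=(\tau,h)$ be the solution of \eqref{HamSyst-Psi} with $w(0)=(t_0,h_0)$; being defined on all of $[0,2\pi]$ it stays in the phase space $\mathbb{R}\times\mathopen{]}0,+\infty\mathclose{[}$, so $h>0$ on $[0,2\pi]$, while $\tau'=\partial_h\Psi>0$ by Remark~\ref{increasing}, so $\tau$ is strictly increasing and invertible. By the converse correspondence in Section~\ref{section-2.1}, $\theta=\tau^{-1}$ and $\rho(t)=\Psi(\theta(t),t,h(\theta(t)))>0$ define a solution $(\theta,\rho)$ of \eqref{HamSyst-H}, and undoing the change of variables \eqref{modpolcor} yields a nonvanishing solution $z=(x,y)$ of \eqref{HamSyst} on $[\tau(0),\tau(2\pi)]=[t_0,t_1]$. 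Since $\theta$ runs over $[0,2\pi]$ one gets $\Rot(z,[t_0,t_1])=1$; from $\theta(t_0)=0$ one has $x(t_0)=0$, $y(t_0)=\sqrt{2\rho(t_0)}>0$, and from $\theta(t_1)=2\pi$ one has $x(t_1)=0$, $y(t_1)=\sqrt{2\rho(t_1)}>0$. Setting $y_0=y(t_0)$ and $y_1=y(t_1)$, the solution $z$ realizes $(t_1,y_1)=\SuccMap(t_0,y_0)$, with $h_0$, $h_1$ the energies of $z$ at $t_0$, $t_1$, as in the statement. I do not expect a real obstacle here: the delicate points are merely that $\Rot(z,[t_0,t_1])=1$ corresponds precisely to $\theta$ running from $0$ to $2\pi$ (which uses the strict monotonicity of $\theta$ from \ref{posit} and that both endpoints lie on the positive $y$-semiaxis, so $\theta$ is a multiple of $2\pi$ there), and that, for the converse implication, the solution of \eqref{HamSyst-Psi} being defined on the whole of $[0,2\pi]$ is exactly what keeps $h$ positive throughout, so that the inverse coordinate change is legitimate.
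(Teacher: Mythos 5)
Your argument is correct and follows the same route as the paper, which presents the proposition merely as a summary of the reversible changes of variables set up in Sections~\ref{section-2.1} and~\ref{section-2.4}; you have simply made explicit the bookkeeping (the normalization $\theta(t_0)=0$ so that $\Rot(z,[t_0,t_1])=1$ becomes $\theta(t_1)=2\pi$, and, for the converse, that the $(\tau,h)$-trajectory staying in $\R\times\mathopen{]}0,+\infty\mathclose{[}$ over $[0,2\pi]$ is what legitimizes inverting the coordinate change) that the paper leaves implicit. One minor remark: the formula $h_0=\HH(t_0,0,y_0)$ in the statement is a notational slip for the energy $H(t_0,0,y_0)=\HH(t_0,0,\tfrac12 y_0^2)$, which is precisely what your computation via \eqref{hdef} produces.
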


\begin{remark}\label{rem-2.5}
The hypothesis \ref{susu} is rather natural in the applications we have in mind. Notice that it has been used only to simplify the choice of the domain of the function $\Psi$.
\hfill$\lhd$
\end{remark}

To conclude this section, we emphasize  that all the above discussion may be rephrased also in the case when there is no uniqueness for initial value problems. Clearly enough, the notation will be accordingly interpreted.

\section{A Poincaré--Birkhoff theorem for multivalued successor maps}\label{section-3}

We now present a version of the Poincaré--Birkhoff theorem in the context of the successor map $\mathcal{S}$. Notice that we do not require uniqueness for the solutions of initial value problems.

\begin{theorem}\label{PB-thm}
Let $H\colon \mathbb{R}\times\mathbb{R}^{2} \to \mathbb{R}$ be a function of class $\mathcal{C}^{1}$, $T$-periodic in $t$, satisfying \ref{posit} and \ref{susu}.
Assume that the successor map $\SuccMap=(\TT,\YY)$ is well-defined on $\R\times\mathopen{]}0,+\infty\mathclose{[}$ and that there are two positive constants $\alpha<\beta$ and two positive integers $m$ and $k$ such that
\begin{equation}
\label{twist-xy}
\TT^m(t_0,\alpha)-t_0> kT,
\qquad
\TT^m(t_0,\beta)-t_0 < kT,
\end{equation}
for every $t_0\in[0,T]$.
Then, there exist two distinct $kT$-periodic solutions $z^{(i)}=(x^{(i)},y^{(i)})$ of \eqref{HamSyst}, with $i=1,2$, such that
\begin{equation}\label{Rot-x}
\mathrm{Rot}(z^{(i)}(t),\mathopen{[}0,kT \mathclose{]}) = m,
\end{equation}
and
\begin{equation}\label{loc-x}
\bigl{\{} y^{(i)}(t) \colon t\in \mathopen{[}0,T \mathclose{[}\bigr{\}}\cap \mathopen{]}\alpha,\beta\mathclose{[}\neq \emptyset.
\end{equation}
\end{theorem}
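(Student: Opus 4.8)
The plan is to transport the problem, by means of the symplectic change of variables developed in Section~\ref{section-2}, to the Hamiltonian system \eqref{HamSyst-Psi}, and then to apply the variant of the Poincar\'e--Birkhoff theorem of \cite{FoUr-17}, which holds for Hamiltonian systems and does not require uniqueness for the associated Cauchy problems.

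To start with, since $H$ is of class $\CC^{1}$, $T$-periodic in $t$, and satisfies \ref{posit} and \ref{susu} (together with the normalization \eqref{zeroatzero}), Proposition~\ref{prop-map-Psi} yields the $\CC^{1}$ function $\Psi(\vartheta,\tau,h)$ on $\UU$, $2\pi$-periodic in $\vartheta$ and $T$-periodic in $\tau$. By the construction in Section~\ref{section-2.1} and Proposition~\ref{prop-2.3}, read in the absence of uniqueness as explained at the end of Section~\ref{section-2}, the solutions of \eqref{HamSyst} that never reach the origin correspond to the solutions of \eqref{HamSyst-Psi} with $h>0$, through $\vartheta=\theta$, $\tau=$ time, and $h=\HH(\tau,\vartheta,\rho)$. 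In particular, the assumption that $\SuccMap$ be well-defined on $\R\times\mathopen{]}0,+\infty\mathclose{[}$ means that every solution of \eqref{HamSyst-Psi} issuing at $\vartheta=0$ from a point with $h>0$ is defined on the whole $[0,2\pi m]$, with $\tau$ strictly increasing; moreover, a solution $w=(\tau,h)$ of \eqref{HamSyst-Psi} defined on $[0,2\pi m]$ with $\tau(2\pi m)=\tau(0)+kT$ and $h(2\pi m)=h(0)$ corresponds, going back, to a solution $z=(x,y)$ of \eqref{HamSyst} defined on $[\tau(0),\tau(0)+kT]$, never vanishing there, with $z(\tau(0)+kT)=z(\tau(0))$ and $\Rot(z,[\tau(0),\tau(0)+kT])=m$. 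Since \eqref{HamSyst} is $kT$-periodic in $t$, such a $z$ extends to a $kT$-periodic solution, still satisfying \eqref{Rot-x}. Thus the theorem reduces to producing two such solutions $w$ of \eqref{HamSyst-Psi}, with the additional localization information.

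For the Poincar\'e--Birkhoff theorem I would work on the cylinder $(\R/T\mathbb{Z})\times\mathopen{]}0,+\infty\mathclose{[}$, where the $T$-periodicity of $\Psi$ in $\tau$ makes the time-$2\pi$ map of \eqref{HamSyst-Psi} (hence its time-$2\pi m$ map) a well-defined, area-preserving map. Set $a(\tau)=H(\tau,0,\alpha)$ and $b(\tau)=H(\tau,0,\beta)$: by Remark~\ref{increasing} (applied to $v=(0,1)$) these are $\CC^{1}$, $T$-periodic in $\tau$, and satisfy $0<a(\tau)<b(\tau)$ everywhere; moreover $y\mapsto H(\tau,0,y)$ is, by the same remark together with \ref{susu} and \eqref{zeroatzero}, an increasing bijection of $\mathopen{]}0,+\infty\mathclose{[}$ onto itself. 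Hence the compact annular region of the cylinder
\[
\AA=\bigl\{(\tau,h) : a(\tau)\le h\le b(\tau)\bigr\}
\]
corresponds, under $h_{0}=H(\tau_{0},0,y_{0})$, to $\{(\tau_{0},y_{0}):y_{0}\in[\alpha,\beta]\}$. If $(\tau_{0},h_{0})=(t_{0},a(t_{0}))$ lies on the lower boundary, the corresponding solutions of \eqref{HamSyst} are exactly those through $(t_{0},0,\alpha)$, so after $\vartheta$ has swept $[0,2\pi m]$ the $\tau$-component equals $\TT^{m}(t_{0},\alpha)$; likewise it equals $\TT^{m}(t_{0},\beta)$ on the upper boundary. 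Therefore \eqref{twist-xy} says precisely that the time-$2\pi m$ flow of \eqref{HamSyst-Psi} displaces the lower boundary of $\AA$ by more than $kT$ and the upper boundary by less than $kT$ in the $\tau$-direction: this is a boundary twist of fixed sign, which in particular forbids any ``$kT$-shifted fixed point'' from lying on $\partial\AA$. The Poincar\'e--Birkhoff theorem of \cite{FoUr-17} then provides at least two distinct points in the interior of $\AA$ from which solutions of \eqref{HamSyst-Psi} start that are defined on $[0,2\pi m]$, have $\tau$-increment exactly $kT$, and come back to the same value of $h$.

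Translating back as in the second paragraph, these two points give two $kT$-periodic solutions $z^{(1)},z^{(2)}$ of \eqref{HamSyst} satisfying \eqref{Rot-x}. To obtain \eqref{loc-x}, take for the $i$-th point the representative $\tau_{0}^{(i)}\in[0,T\mathclose{[}$; at $\vartheta=0$ the solution $z^{(i)}$ sits on the positive $y$-semiaxis at height $y^{(i)}(\tau_{0}^{(i)})=y_{0}^{(i)}$, and $h_{0}^{(i)}=H(\tau_{0}^{(i)},0,y_{0}^{(i)})$ lies strictly between $a(\tau_{0}^{(i)})$ and $b(\tau_{0}^{(i)})$ since the point is interior to $\AA$; the monotonicity of $H(\tau_{0}^{(i)},0,\cdot)$ then gives $y^{(i)}(\tau_{0}^{(i)})\in\mathopen{]}\alpha,\beta\mathclose{[}$ with $\tau_{0}^{(i)}\in[0,T\mathclose{[}$, which is \eqref{loc-x}. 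The two solutions are distinct because the two points provided by \cite{FoUr-17} do not lie on the same periodic orbit. The main difficulty I expect is exactly the matching of hypotheses: verifying that the present geometric picture --- the $T$-periodic, non rotationally invariant annular region $\AA$, the area-preserving time-$2\pi m$ map whose solutions are globally defined but possibly non-unique, and the sign of the twist in \eqref{twist-xy} --- is precisely the one covered by the Poincar\'e--Birkhoff theorem of \cite{FoUr-17}.
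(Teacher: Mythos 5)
Your proposal follows the same overall strategy as the paper: transport the problem via $\Psi$ to the Hamiltonian system \eqref{HamSyst-Psi}, apply the Poincar\'e--Birkhoff theorem of \cite{FoUr-17} on the annulus $a(\tau)\le h\le b(\tau)$, and translate back. However, the step you flag as ``the main difficulty'' --- matching the hypotheses of \cite[Theorem~6.2]{FoUr-17} --- is precisely what is missing, and it is not a formality. The version in \cite{FoUr-17} is formulated for a Hamiltonian that is periodic in $\tau$, with the twist expressed as a sign condition on the bare displacement $\tau(2\pi m)-\tau(0)$, not on $\tau(2\pi m)-\tau(0)-kT$. The paper reduces to that situation by passing to the auxiliary Hamiltonian
\begin{equation*}
\overline\Psi(\vartheta,\tau,h)= \Psi\Bigl(\vartheta,\tau+\frac{kT}{2m\pi}\vartheta,h\Bigr)-\frac {kT}{2m\pi}\, h,
\end{equation*}
i.e., a drifting frame that absorbs the offset $kT$ over $[0,2\pi m]$; one must then check that $\overline\Psi$ inherits the required periodicities (it is $2\pi m$-periodic in $\vartheta$ once the $T$-periodicity of $\Psi$ in $\tau$ is used) and that the boundary twist condition of \cite{FoUr-17} becomes \eqref{twist-xy}. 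Your sketch never performs this reduction, so the invocation of \cite{FoUr-17} is not yet justified.

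A second, smaller gap concerns distinctness. The statement ``the two points do not lie on the same periodic orbit'' is not by itself what you need; you need that the corresponding solutions satisfy $z^{(1)}(\cdot)\neq z^{(2)}(\cdot+jT)$ for every $j\in\mathbb{Z}$, i.e., that they are not time-translates of one another. The paper deduces this from the separation property \eqref{quelasora}, which is a refinement of the conclusion of \cite{FoUr-17} explained in \cite[p.~2352]{FoTo-23}, and then passes to inverse functions to transfer it from the $(\tau,h)$-picture to the original variables. Your argument asserts the conclusion without this stronger separation. The identification of the annulus, the localization \eqref{loc-x} via the monotonicity of $H(\tau,0,\cdot)$, and the rotation number count are all handled correctly and match the paper.
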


\begin{remark}\label{rem-multivalued}
In the statement of Theorem~\ref{PB-thm} we allow the successor map $\SuccMap=(\TT,\YY)$ to be multivalued, and the iterates $\SuccMap^m=(\TT^m,\YY^m)$ as well. In this case, condition \eqref{twist-xy} says that, for every $(\xi,\upsilon)\in \SuccMap^m(t_0,\alpha)$, one has $\xi-t_0> kT$, and for every $(\xi,\upsilon)\in \SuccMap^m(t_0,\beta)$, one has $\xi-t_0< kT$. In the following, we will implicitly agree with such an interpretation, in order to simplify the notation.
\hfill$\lhd$
\end{remark}

\begin{remark}\label{sol-dist}
The two solutions we find in Theorem~\ref{PB-thm} are indeed \textit{distinct} since we will show that $z^{(1)}(\cdot) \neq z^{(2)}(\cdot + j T)$ for every $j\in\mathbb{Z}$.
\hfill$\lhd$
\end{remark}

\begin{proof}[Proof of Theorem~\ref{PB-thm}]
We define
\begin{equation}\label{def-ab}
a(t)=H(t,0,\alpha)
\quad \text{ and } \quad
b(t)=H(t,0,\beta).
\end{equation}
From Remark~\ref{increasing}
we get $a(t)<b(t)$ for every $t\in\mathbb{R}$.

Performing the change of variables described in Section~\ref{section-2.1} which leads to the Hamiltonian system \eqref{HamSyst-Psi}, we see that every solution $(\tau,h)$ of \eqref{HamSyst-Psi} with initial condition
$h(0)\in[a(\tau(0)),b(\tau(0))]$
is defined in the interval $[0,2\pi m]$ and satisfies
\begin{equation*}
\begin{cases}
\, \tau(2\pi m)-\tau(0) > kT,
&\text{if $h(0)=a(\tau(0))$,}
\\
\, \tau(2\pi m)-\tau(0) <  kT,
&\text{if $h(0)=b(\tau(0))$.}
\end{cases}
\end{equation*}
By the periodicity properties of  $\Psi$ stated in Proposition~\ref{prop-map-Psi}, 
we can apply the Poincar\'e--Birkhoff theorem in the version stated in~\cite[Theorem~6.2]{FoUr-17}
to the auxiliary system with Hamiltonian function
\begin{equation*}
\overline\Psi(\vartheta,\tau,h)= \Psi\left(\vartheta,\tau+\frac{kT}{2m\pi}\vartheta,h\right)-\frac {kT}{2m\pi} h,
\end{equation*}
thus
obtaining two solutions $w^{(i)}=(\tau^{(i)},h^{(i)})$ of \eqref{HamSyst-Psi} 
such that $\tau^{(i)}(0)\in\mathopen{[}0,T\mathclose{[}$, $h^{(i)}(0)\in\mathopen{]}a(\tau^{(i)}(0)),b(\tau^{(i)}(0))\mathclose{[}$ and
\begin{equation*}
\tau^{(i)}(\vartheta+2\pi m)=\tau^{(i)}(\vartheta)+ kT,
\quad
h^{(i)}(\vartheta+2\pi m)=h^{(i)}(\vartheta),
\quad\text{for every $\vartheta\in\R$.}
\end{equation*}
Moreover, 
\begin{equation}\label{quelasora}
\tau^{(1)}(\cdot+ 2\pi \ell_1)+ j_1 T \neq \tau^{(2)}(\cdot+2\pi \ell_2) +j_2 T, \quad \text{for every $j_1,j_2,\ell_1,\ell_2\in\mathbb{Z}$.}
\end{equation}
This follows from the proof of the Poincar\'e--Birkhoff theorem in \cite{FoUr-17}, as explained in \cite[p.~2352]{FoTo-23}.

Using Proposition~\ref{prop-map-Psi} and the change of variables \eqref{modpolcor}, we can recover two solutions $z^{(i)}=(x^{(i)},y^{(i)})$ of system \eqref{HamSyst} which are $kT$-periodic and
satisfy \eqref{Rot-x}. Writing $z^{(i)}$ in the generalized polar coordinates  $(\theta^{(i)},\rho^{(i)})$, as in \eqref{modpolcor}, we have that  $\theta^{(i)}(t)$  is the inverse of $\tau^{(i)}(\vartheta)$. So, passing to the inverse functions, \eqref{quelasora} reads as
\begin{equation*}
\theta^{(1)}(\cdot -j_1 T)- 2\pi \ell_1 \neq \theta^{(2)}(\cdot -j_2 T)- 2\pi \ell_2, \quad \text{for every $j_1,j_2,\ell_1,\ell_2\in\mathbb{Z}$.}
\end{equation*}
We have thus proved that
$z^{(1)}(\cdot)\neq z^{(2)}(\cdot + jT)$, for every $j\in\mathbb Z$.

Recalling \eqref{HHH} and \eqref{def-ab}, we get
\begin{equation*}
\HH(\tau^{(i)}(0),0,\tfrac12 \alpha^2) < h^{(i)}(0) <\HH(\tau^{(i)}(0),0,\tfrac12 \beta^2),
\end{equation*}
and so, by Proposition~\ref{prop-map-Psi}, we have
\begin{equation*}
\tfrac12 \alpha^2 < \Psi(0,\tau^{(i)}(0),h^{(i)}(0)) < \tfrac12 \beta^2.
\end{equation*}
Since 
\begin{equation*}
\tfrac12 [y^{(i)}(\tau^{(i)}(0))]^2 = \rho(\tau^{(i)}(0))=\Psi(0,\tau^{(i)}(0),h^{(i)}(0)),
\end{equation*}
we get \eqref{loc-x}. The proof is thus completed.
\end{proof}

\section{Sufficient conditions for the existence of the successor map}\label{section-4}

With the aim of well-defining the successor map $\mathcal{S}$ associated with a general planar system 
\begin{equation}\label{syst-fg-bis}
\begin{cases}
\, \phantom{-}\dot x=f(t,x,y),
\\
\, -\dot y=g(t,x,y),
\end{cases}
\end{equation}
where $f,g \colon\mathbb{R}\times\mathbb{R}^2\to\mathbb{R}$ are continuous functions,
we introduce the following hypotheses.

\begin{enumerate}[leftmargin=30pt,labelsep=8pt,label=\textup{$(A_{1})$}]
\item $g(t,x,y)x+f(t,x,y)y>0$, for every $(t,x,y)\in\mathbb{R}\times(\mathbb{R}^2\setminus\{0\})$.
\label{hp-A-1}
\end{enumerate}
Notice that \ref{hp-A-1} corresponds to \ref{posit} in the case of Hamiltonian systems. In particular, the origin is an equilibrium. 
Recalling the modified polar coordinates introduced in \eqref{modpolcor},
assumption \ref{hp-A-1} implies that, for any solution of system \eqref{syst-fg-bis},
\begin{equation*}
\dot\theta(t)=\frac{g(t,x(t),y(t))x(t)+f(t,x(t),y(t))y(t)}{x^2(t)+y^2(t)}>0,
\end{equation*}
as long as the solution exists and remains away from the origin.

\begin{enumerate}[leftmargin=30pt,labelsep=8pt,label=\textup{$(A_{2})$}]
\item There exist $R>0$ and $c_1>0$ such that
\begin{equation}\label{gxfy}
g(t,x,y)x+f(t,x,y)y\geq c_1(x^2+y^2),
\end{equation}
for every $(t,x,y)\in\mathbb{R}\times\mathbb{R}^2$ with $x^2+y^2\ge R^2$.
\label{hp-A-2}
\end{enumerate}
We observe that hypothesis \ref{hp-A-2} implies that the solutions of \eqref{syst-fg-bis} rotate decisively when they are far from the origin.

\begin{enumerate}[leftmargin=30pt,labelsep=8pt,label=\textup{$(A_{3})$}]
\item There exist two continuous functions $\phi,\psi \colon \mathopen{[}0,+\infty\mathclose{[} \to \mathopen{[}0,+\infty\mathclose{[}$ such that
\begin{equation*}
|f(t,x,y)|\le\phi(|y|),
\quad
|g(t,x,y)|\le\psi(|x|),
\end{equation*}
for every $(t,x,y)\in\mathbb{R}\times\mathbb{R}^2$.
\label{hp-A-3}
\end{enumerate}
Clearly, hypothesis \ref{hp-A-3} is satisfied if $f(t,x,y)$ does not depend on $x$ and $g(t,x,y)$ does not depend on $y$.
The next assumption describes the behaviour near the axes.

\begin{enumerate}[leftmargin=30pt,labelsep=8pt,label=\textup{$(A_{4})$}]
\item \label{hp-A-4} There exist $\delta>0$ such that, for every $(t,x,y)\in\mathbb{R}\times\mathbb{R}^{2}$,
\begin{align}
\label{A4-a}
&\bigl{[} \; |x|\leq\delta,\;xy\geq0,\;y\neq0 \;\bigr{]}
\quad\Rightarrow\quad f(t,x,y)y>0,
\\
\label{A4-b}
&\bigl{[}\;|y|\leq\delta,\;xy\leq0,\;x\neq0\;\bigr{]}
\quad\Rightarrow\quad g(t,x,y)x>0.
\end{align}
\end{enumerate}
Finally, we need to control the vector field in some regions which are far from the origin.

\begin{enumerate}[leftmargin=30pt,labelsep=8pt,label=\textup{$(A_{5})$}]
\item \label{hp-A-5} There exist $D>0$ such that, for every $(t,x,y)\in\mathbb{R}\times\mathbb{R}^{2}$,
\begin{align}
\label{A5-a}
&\bigl{[}\; |x|\geq D,\;|y|\geq D,\;xy\leq0\;\bigr{]}\quad\Rightarrow\quad f(t,x,y)y>0,
\\
\label{A5-b}
&\bigl{[}\;|x|\geq D,\;|y|\geq D,\;xy\geq0\;\bigr{]}\quad\Rightarrow\quad g(t,x,y)x>0.
\end{align}
\end{enumerate}
See Figure~\ref{fig-01} for a graphical representation of the hypotheses
\ref{hp-A-4} and \ref{hp-A-5}.

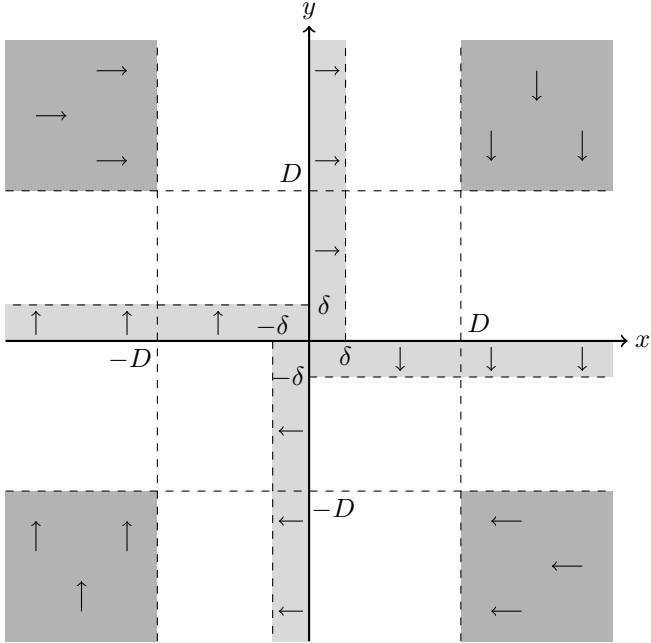
\begin{figure}[t]
\centering
\begin{tikzpicture}[x=4mm,y=4mm]
\fill[fill=FFS-1] (0,0) -- (1.2,0) -- (1.2,10) -- (0,10) -- (0,0);
\fill[fill=FFS-1] (0,0) -- (0,-1.2) -- (10,-1.2) -- (10,0) -- (0,0);
\fill[fill=FFS-1] (0,0) -- (-1.2,0) -- (-1.2,-10) -- (0,-10) -- (0,0);
\fill[fill=FFS-1] (0,0) -- (0,1.2) -- (-10,1.2) -- (-10,0) -- (0,0);
\fill[fill=FFS-2] (5,5) -- (10,5) -- (10,10) -- (5,10) -- (5,5);
\fill[fill=FFS-2] (5,-5) -- (10,-5) -- (10,-10) -- (5,-10) -- (5,-5);
\fill[fill=FFS-2] (-5,-5) -- (-10,-5) -- (-10,-10) -- (-5,-10) -- (-5,-5);
\fill[fill=FFS-2] (-5,5) -- (-10,5) -- (-10,10) -- (-5,10) -- (-5,5);
\draw[->,line width=0.8pt] (-10,0) -- (10.5,0);
\draw[->,line width=0.8pt] (0,-10) -- (0,10.5);
\draw (11,0) node {$x$};
\draw (0,11) node {$y$};
\draw[dashed] (1.2,0) -- (1.2,10);
\draw[dashed] (0,-1.2) -- (10,-1.2);
\draw[dashed] (-1.2,0) -- (-1.2,-10);
\draw[dashed] (0,1.2) -- (-10,1.2);
\draw[dashed] (5,-10) -- (5,10);
\draw[dashed] (-5,-10) -- (-5,10);
\draw[dashed] (-10,5) -- (10,5);
\draw[dashed] (-10,-5) -- (10,-5);
\draw[->] (6,7) -- (6,6);
\draw[->] (9,7) -- (9,6);
\draw[->] (7.5,9) -- (7.5,8);
\draw[->] (-6,-7) -- (-6,-6);
\draw[->] (-9,-7) -- (-9,-6);
\draw[->] (-7.5,-9) -- (-7.5,-8);
\draw[->] (7,-6) -- (6,-6);
\draw[->] (7,-9) -- (6,-9);
\draw[->] (9,-7.5) -- (8,-7.5);
\draw[->] (-7,6) -- (-6,6);
\draw[->] (-7,9) -- (-6,9);
\draw[->] (-9,7.5) -- (-8,7.5);
\draw[->] (0.2,3) -- (1,3);
\draw[->] (0.2,6) -- (1,6);
\draw[->] (0.2,9) -- (1,9);
\draw[->] (-0.2,-3) -- (-1,-3);
\draw[->] (-0.2,-6) -- (-1,-6);
\draw[->] (-0.2,-9) -- (-1,-9);
\draw[->] (3,-0.2) -- (3,-1);
\draw[->] (6,-0.2) -- (6,-1);
\draw[->] (9,-0.2) -- (9,-1);
\draw[->] (-3,0.2) -- (-3,1);
\draw[->] (-6,0.2) -- (-6,1);
\draw[->] (-9,0.2) -- (-9,1);
\draw (0.5,1.2) node {$\delta$};
\draw (1.2,-0.5) node {$\delta$};
\draw (-0.7,-1.2) node {$-\delta$};
\draw (-1.2,0.5) node {$-\delta$};
\draw (-0.6,5.6) node {$D$};
\draw (5.6,0.6) node {$D$};
\draw (0.8,-5.6) node {$-D$};
\draw (-5.9,-0.6) node {$-D$};
\end{tikzpicture}
\caption{Qualitative representation of hypotheses \ref{hp-A-4} (concerning the area colored in light gray)
and \ref{hp-A-5} (concerning the area colored in dark gray).}
\label{fig-01}
\end{figure}

Under the above assumptions, we will show that any (non-zero) solution of \eqref{syst-fg-bis} performs a complete rotation around the origin in finite time. More precisely, we have the following (cf.~Figure~\ref{fig-02}).

\begin{theorem}\label{successor_thm}
Let \ref{hp-A-1}--\ref{hp-A-5} hold true. Then, every solution $(x,y)$ of \eqref{syst-fg-bis} with initial conditions $x(t_0)=0$, $y(t_0)=y_0>0$ is defined on an interval $[t_0,t_1]$, and there exist $t_0'$, $t_0''$, $t_0'''$ with $t_0<t_0'<t_0''<t_0'''<t_1$ such that 
\begin{align*}
&x(t)>0, \; y(t)>0, \quad\text{for every $t\in\mathopen{]}t_0,t_0'\mathclose{[}$,}
\\
&x(t_0')>0, \; y(t_0')=0,
\\
&x(t)>0, \; y(t)<0, \quad\text{for every $t\in\mathopen{]}t_0',t_0''\mathclose{[}$,}
\\
&x(t_0'')=0, \; y(t_0'')<0,
\\
&x(t)<0, \; y(t)<0, \quad\text{for every $t\in\mathopen{]}t_0'',t_0'''\mathclose{[}$,}
\\
&x(t_0''')<0, \; y(t_0''')=0,
\\
&x(t)<0, \; y(t)>0 \quad\text{for every $t\in\mathopen{]}t_0''',t_1\mathclose{[}$,}
\\
&x(t_1)=0, \; y(t_1)>0.
\end{align*}
\end{theorem}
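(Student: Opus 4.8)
The plan is a phase-plane analysis, following the orbit through the four quadrants crossed by the modified polar angle $\theta$ of \eqref{modpolcor}, combined with a symmetry reduction that brings the eight conditions in the conclusion down to a single one. The key remark is that the rotation $(x,y)\mapsto(-y,x)$ (which in polar coordinates is $\theta\mapsto\theta-\pi/2$) turns \eqref{syst-fg-bis} into a system of the same form whose right-hand side still satisfies \ref{hp-A-1}--\ref{hp-A-5}, now with the pairs \eqref{A4-a}/\eqref{A4-b} and \eqref{A5-a}/\eqref{A5-b} interchanged and with $\phi$ and $\psi$ interchanged, and that it advances the four closed half-axes cyclically. Hence it suffices to establish the first link of the chain --- that the orbit through $(0,y_0)$ with $y_0>0$ enters the open first quadrant $\{x>0,\ y>0\}$ right after $t_0$ and meets the positive $x$-semiaxis at a finite time $t_0'>t_0$, staying in the open first quadrant on $\mathopen{]}t_0,t_0'\mathclose{[}$ --- and then to apply this to the rotated system three more times to obtain $t_0''$, $t_0'''$, $t_1$ with the remaining sign patterns; the orbit is defined on all of $[t_0,t_1]$ because, as the argument shows, it cannot cease to exist before completing a full turn.

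First I would record existence: by Peano's theorem the Cauchy problem has a solution on a maximal forward interval $\mathopen{[}t_0,\omega\mathclose{[}$. Evaluating \ref{hp-A-1} at $(t_0,0,y_0)$ gives $f(t_0,0,y_0)\,y_0>0$, hence $\dot x(t_0)>0$, so the orbit enters the open first quadrant. As long as it lies in $\{0<x\le\delta,\ y>0\}$, \eqref{A4-a} gives $f(t,x,y)>0$, so $x$ is strictly increasing there; thus the orbit cannot come back to the $y$-axis and, $x$ being increasing, it stays off the origin in that strip. By \ref{hp-A-1}, $\dot\theta>0$ wherever the orbit is nonzero, so $\theta$ increases strictly from $\theta(t_0)=0$; hence, while the orbit stays in the closed first quadrant, either it has already reached the positive $x$-semiaxis, or $\theta(t)\in\mathopen{]}0,\pi/2\mathclose{[}$.

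The core of the proof is a contradiction argument: suppose the orbit remains in the open first quadrant on all of $\mathopen{]}t_0,\omega\mathclose{[}$, so that $\theta$ increases to some $L\in\mathopen{]}0,\pi/2\mathclose{]}$. I would first show the orbit is bounded and bounded away from the origin: by \eqref{A5-b}, in the corner $\{x\ge D,\ y\ge D\}$ one has $g>0$, hence $\dot y<0$, so $y$ cannot grow while the orbit is there, which bounds $y$ along the orbit; then \ref{hp-A-3} makes $|\dot x|\le\phi(|y|)$ bounded, excluding finite-time blow-up, and, together with $\theta\to L$ and the corner estimate, it bounds $x$ too, so $\omega=+\infty$; and the orbit stays off the origin since, near it, $x\le\delta$ forces $\dot x>0$ by \eqref{A4-a}. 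It remains to exclude the orbit being trapped in the open first quadrant for all time. If $L<\pi/2$, the orbit eventually lies in a compact subset of the open first quadrant, where \ref{hp-A-1} keeps $\dot\theta=(gx+fy)/(x^2+y^2)$ bounded below by a positive constant, so $\theta(t)\to+\infty$, contradicting $\theta\to L$. If $L=\pi/2$, then $y(t)\to0$ while $x$ stays away from $0$, and near the positive $x$-semiaxis \ref{hp-A-1} at $y=0$ gives $g>0$, so $-\dot y=g$ is bounded away from $0$ and $y$ vanishes in finite time --- that is, the orbit does reach the positive $x$-semiaxis, again a contradiction. In both cases the orbit meets $\{x>0,\ y=0\}$ at a finite $t_0'>t_0$, necessarily with $x(t_0')>0$, and $\dot y(t_0')=-g(t_0',x(t_0'),0)<0$ by \ref{hp-A-1}, so it crosses into the fourth quadrant, where \eqref{A4-b} keeps $y$ decreasing; invoking the symmetry, one then repeats the argument three more times, the final $y(t_1)>0$ being once more a consequence of the orbit avoiding the origin.

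I expect the main obstacle to be the contradiction step, i.e.\ ruling out the three ways a solution might fail to complete a turn: spiralling into the origin, lingering asymptotically against a semiaxis, or escaping to infinity within a single quadrant. These are exactly the degeneracies that \ref{hp-A-2}, \ref{hp-A-4}, \ref{hp-A-5} are tailored to forbid --- \ref{hp-A-4} near the axes and the origin, \ref{hp-A-5} in the far corners, and \ref{hp-A-2} guaranteeing a definite angular speed far out --- and carrying out the bookkeeping of the transitions among the strips $\{|x|\le\delta\}$ and $\{|y|\le\delta\}$, the corners $\{|x|\ge D,\ |y|\ge D\}$, and the region $\{x^2+y^2\ge R^2\}$, as well as checking the invariance of \ref{hp-A-1}--\ref{hp-A-5} under the rotation, is where the technical work concentrates.
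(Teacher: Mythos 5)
Your overall geometric route --- reduce to one quadrant by the rotational symmetry $(x,y)\mapsto(-y,x)$ (which the paper invokes in a single sentence), exclude the origin and finite-time blow-up, and force the orbit across the positive $x$-semiaxis --- is the same as the paper's, but your finiteness argument is organized differently and contains a gap. The paper establishes a \emph{uniform} lower bound $\dot\theta\geq c_1'>0$ on the whole set $\{xy\geq0,\ x\geq\delta'\}$, obtained by compactness on the bounded piece $\{\delta'\leq x,\ xy\geq 0,\ x^2+y^2\leq R^2\}$ together with \ref{hp-A-2} on the complement; since $\theta$ can advance by at most $\pi/2$ while in the quadrant, the crossing time is at most $\pi/(2c_1')$, and no a priori bound on the orbit is ever needed. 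You instead argue by contradiction, send $\theta\to L$, and split into $L<\pi/2$ and $L=\pi/2$; both cases lean on the orbit being bounded (a compact subset of the open quadrant for $L<\pi/2$; $y\to 0$ requires $\rho$ bounded for $L=\pi/2$).

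The justification you give for boundedness --- that \eqref{A5-b} forces $\dot y<0$ in the corner $\{x\geq D,\ y\geq D\}$, ``which bounds $y$ along the orbit'' --- is a non-sequitur: \ref{hp-A-5} is silent in the strip $\{0<x<D\}$, where nothing prevents $y$ from growing. One can repair this by exploiting the strict monotonicity of $\theta$ (once $\theta\geq\theta_*>0$ one has $y/x=\cot\theta\leq\cot\theta_*$, so large $y$ forces the orbit into the corner with $y$ still large and then decreasing, a contradiction), but the cleaner fix is to use \ref{hp-A-2} exactly as the paper does: the angular-speed lower bound then holds even for an unbounded orbit, so the orbit reaches $\theta=\pi/2$ in finite time directly and both of your cases become superfluous. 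The remaining pieces (entering the quadrant by \ref{hp-A-1} and \eqref{A4-a}, staying away from the $y$-axis and the origin, no finite-time blow-up via \ref{hp-A-3} and \eqref{A5-b}, and the rotational invariance of \ref{hp-A-1}--\ref{hp-A-5} with \eqref{A4-a}/\eqref{A4-b}, \eqref{A5-a}/\eqref{A5-b} and $\phi,\psi$ swapped) match the paper.
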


\begin{figure}[t]
\centering
\begin{tikzpicture}[x=3mm,y=3mm]
\draw[->,line width=0.8pt] (-10,0) -- (10.5,0);
\draw (11.2,0) node {$x$};
\draw[->,line width=0.8pt] (0,-10) -- (0,10.5);
\draw (0,11.2) node {$y$};
\draw [color=gray,line width=0.9pt] plot [smooth] coordinates {(0,7.5) (3,8) (8,8) (5,3) (6,0) (7,-2) (6,-6) (0,-7) (-5,-6) (-4.5,-4) (-2,-2.5) (-3,0) (-4,1.5) (-4,3) (-5.5,6) (-2,6) (0,3.5)};
\draw[->,color=gray,line width=0.8pt] (3.8,8.1) -- (3.81,8.101);
\draw[->,color=gray,line width=0.8pt] (5.5,4) -- (5.4,3.85);
\draw[->,color=gray,line width=0.8pt] (6.95,-3) -- (6.93,-3.1);
\draw[->,color=gray,line width=0.8pt] (2.5,-6.82) -- (2.35,-6.84);
\draw[->,color=gray,line width=0.8pt] (-4.7,-4.3) -- (-4.685,-4.28);
\draw[->,color=gray,line width=0.8pt] (-4.02,2) -- (-4.01,2.05);
\draw[->,color=gray,line width=0.8pt] (-2.4,6.1) -- (-2.3,6.08);
\draw (-2,8) node {$t=t_{0}$};
\draw (4.6,-1) node {$t=t_{0}'$};
\draw (-2,-8) node {$t=t_{0}''$};
\draw (-4.9,-1) node {$t=t_{0}'''$};
\draw (1.8,4) node {$t=t_{1}$};
\fill (0,7.5) circle (1.2pt);
\fill (6,0) circle (1.2pt);
\fill (0,-7) circle (1.2pt);
\fill (-3,0) circle (1.2pt);
\fill (0,3.5) circle (1.2pt);
\end{tikzpicture}
\caption{Qualitative representation of the statement of Theorem~\ref{successor_thm}.}
\label{fig-02}
\end{figure}
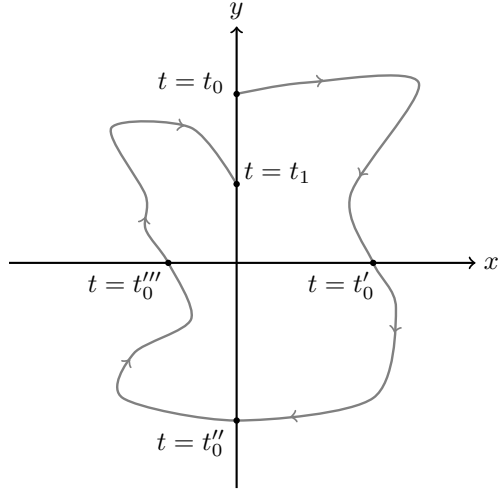

\begin{proof}
We focus our attention on the first quadrant $\mathopen{[}0,+\infty\mathclose{[}\times\mathopen{[}0,+\infty\mathclose{[}$, as the situation is symmetric in the other three quadrants.

Let us consider a solution $(x,y)$ of \eqref{syst-fg-bis} starting with $x(t_0)=0$, $y(t_0)=y_0>0$, and analyse its behaviour for $t>t_0$. It immediately moves away from the $y$-axis by going to the right, by \ref{hp-A-1} (or \eqref{A4-a}). In a small positive time, $(x,y)$ moves to a certain distance $\delta'\in\mathopen{]}0,\delta\mathclose{]}$ (depending on the solution) from the $y$-axis and cannot get closer anymore, by \eqref{A4-a}. In this time, it could already reach the $x$-axis; in this case we are done.

If $(x,y)$ continues its journey in the first quadrant without reaching the $x$-axis, it could explode in finite time. However, this cannot happen while $x$ remains bounded, since $\dot y$ would be bounded as well, by \ref{hp-A-3}. Moreover, it cannot happen while $y$ remains bounded, since $\dot x$ would be bounded, again by \ref{hp-A-3}. Hence, it has to be that both $x(t)\to+\infty$ and $y(t)\to+\infty$ as $t\to+\infty$. Then, there exists a $\bar t>t_0$ such that $x(t)\geq D$ and $y(t)\geq D$, for every $t\geq \bar t$. By \eqref{A5-b}, $y$ is strictly decreasing, hence $y(t)\leq y(\bar t)$ for every $t\geq\bar t$, so $y(t)$ remains bounded, which is impossible, by the above considerations.

At last, we show that the solution cannot remain in the first quadrant for all $t\in \mathopen{]}t_0,+\infty\mathclose{[}$.
By \ref{hp-A-1}, using compactness and continuity, there exists a constant $c'_1\in\mathopen{]}0,c_1\mathclose{]}$ such that 
\begin{align*}
g(t,x,y)x+f(t,x,y)y \geq c'_1 (x^2+y^2),
\quad 
&\text{for every $(t,x,y)\in\mathbb{R}\times\mathbb{R}^{2}$}
\\
&\text{with $xy\geq0$, $x\geq\delta'$ and $x^2+y^2\leq R^2$,}
\end{align*}
where $c_1$ and $R$ are the constants appearing in \ref{hp-A-2}. Hence, using also \ref{hp-A-2},
\begin{align*}
g(t,x,y)x+ f(t,x,y)y\ge c'_1(x^2+y^2),
\quad 
&\text{for every $(t,x,y)\in\mathbb{R}\times\mathbb{R}^{2}$}
\\
&\text{with $xy\ge0$, $x\ge\delta'$.}
\end{align*}
Recalling that $x(t)\geq \delta'$ from some $t$ onwards, this means that the solution must turn decisively and therefore reach the $x$-axis in finite time.
The proof is complete.
\end{proof}

\begin{remark}
We notice that condition \ref{hp-A-2} can be rewritten using the modified polar coordinates \eqref{modpolcor} as $\dot\theta(t) \geq c_1>0$. Moreover, we stress that this condition can be improved by assuming the existence of a continuous function $\eta \colon \mathopen{[}0,2\pi\mathclose{]}\to\mathopen{]}0,+\infty\mathclose{[}$ such that $\dot\theta(t) \geq \eta(\theta(t))$ and
\begin{equation*}
\int_0^{2\pi} \frac{\mathrm{d}s}{\eta(s)} <+\infty.
\end{equation*}
Indeed, assuming that the solution remains in the first quadrant for every $t>t_0$, one has that
\begin{equation*}
t - t_{0} \leq \int_{t_{0}}^{t} \dfrac{\dot\theta(t)}{\eta(\theta(t))}\,\mathrm{d}t = \int_{\theta(t_{0})}^{\theta(t)} \dfrac{\mathrm{d}s}{\eta(s)} \leq\int_{0}^{2\pi} \dfrac{\mathrm{d}s}{\eta(s)},
\end{equation*}
leading to a contradiction. 
\hfill$\lhd$
\end{remark}

\begin{remark}\label{Jac-rem}
In the particular case of the scalar second order equation 
\begin{equation*}
\ddot x+g(t,x)=0,
\end{equation*}
we can write it as system \eqref{syst-fg-bis} with $f(t,x,y)=y$ and $g(t,x,y)=g(t,x)$. 
In this case condition \ref{hp-A-3} is clearly satisfied. Moreover, if we assume
\begin{equation*}
g(t,x)x>0, \quad \text{for every $x\in\mathbb{R}\setminus\{0\}$,}
\end{equation*}
then also \ref{hp-A-1},
\ref{hp-A-4} and
\ref{hp-A-5} hold. Furthermore, if
\begin{equation*}
\liminf_{x\to\pm\infty} \frac{g(t,x)}{x} \geq c >0, \quad \text{uniformly in $t$,}
\end{equation*}
then also condition \ref{hp-A-2} is fulfilled.
Hence, by Theorem~\ref{successor_thm}, the successor map is well-defined. Notice that a possible superlinear growth of $g$ could compromise the possibility of defining the Poincar\'e map, as suggested by the results in \cite{CoUl-67}.
\hfill$\lhd$
\end{remark}

\section{A superlinear differential equation}\label{section-5}

In this section we focus our attention on the scalar second order differential equation
\begin{equation}\label{lambda-eq}
\ddot x + \lambda g(t,x) = 0,
\end{equation}
where $\lambda>0$ is a parameter, the function $g\colon \R\times \R\to\mathbb{R}$ is continuous and $T$-periodic in the variable~$t$.
We assume that
\begin{equation}\label{g-superlinear}
\lim_{x\to \pm \infty} \frac{g(t,x)}{x} = +\infty, 
\quad \text{uniformly in $t$.}
\end{equation}

Let us state our multiplicity result.

\begin{theorem}\label{THM_lambda}
For any pair $(m,k)$ of positive integers there exists $\lambda_{m,k}>0$ such that, for every $\lambda \in\mathopen{]}0,\lambda_{m,k}\mathclose{]}$, equation \eqref{lambda-eq} has a $kT$-periodic solution having exactly $2m$ simple zeros in the interval $\mathopen{[}0,kT\mathclose{[}$.
Moreover, if $\partial_t g\colon  \R\times\R \to \R$ exists and is continuous, there are at least two such solutions. 
\end{theorem}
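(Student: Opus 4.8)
The plan is to recast equation \eqref{lambda-eq} in the framework of Theorem~\ref{PB-thm}. Writing it as the planar Hamiltonian system $\dot x=y$, $-\dot y=\lambda g(t,x)$, with
\begin{equation*}
H(t,x,y)=\tfrac12\,y^{2}+\lambda\int_{0}^{x}g(t,s)\,\mathrm{d}s,
\end{equation*}
the superlinearity \eqref{g-superlinear} makes the potential superquadratic, so $H$ satisfies \ref{susu}. Since $g(t,0)$ need not vanish, condition \ref{posit} (that $\lambda g(t,x)x+y^{2}>0$ off the origin) may fail, but only where $g(t,x)x\le0$; by \eqref{g-superlinear} this set lies in a fixed strip $\{|x|\le M\}$ and, for $\lambda$ small, in a fixed neighbourhood $\mathcal N$ of the origin. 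Following Remark~\ref{rem-2.4}, I would fix the pair $(m,k)$ and, for $\lambda$ small, replace $H$ by a $\mathcal C^{1}$, $T$-periodic Hamiltonian $H^{\ast}$ that equals $H$ outside $\mathcal N$, satisfies \ref{posit} and \ref{susu} everywhere, and in an inner ball around the origin is autonomous and radial, $H^{\ast}(t,z)=\omega\,|z|^{2}$, with $\omega=\omega_{m,k}>0$ so small that its circular orbits make one full turn in time $2kT/m$ — hence $m$ turns in time $2kT>kT$, while over any interval of length $kT$ they have rotation number $m/2\neq m$. The first technical point is the construction of such an $H^{\ast}$: restoring \ref{posit} inside $\mathcal N$ while keeping the patching $\mathcal C^{1}$ and $T$-periodic. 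Taking $\lambda$ small makes $H$ itself satisfy \ref{posit} outside $\mathcal N$ and keeps the correction small there.

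Next I would check that the successor map $\SuccMap=(\TT,\YY)$ of the $H^{\ast}$-system is well defined on $\R\times\mathopen{]}0,+\infty\mathclose{[}$ — outside $\mathcal N$ by Theorem~\ref{successor_thm} and \eqref{g-superlinear} (as in Remark~\ref{Jac-rem}), inside the inner ball because the orbits are explicit circles — and then verify the twist \eqref{twist-xy}. For the inner inequality I take $\alpha>0$ small enough that the orbit through $(0,\alpha)$ is one of the circles of $H^{\ast}=\omega|z|^{2}$, so that $\TT^{m}(t_{0},\alpha)-t_{0}=2kT>kT$ for every $t_{0}\in[0,T]$. For the outer inequality I take $\beta>\alpha$ large: by \eqref{g-superlinear} the orbit through $(0,\beta)$ winds quickly, so $\TT^{m}(t_{0},\beta)-t_{0}<kT$ for every $t_{0}\in[0,T]$ — the classical superlinear fast–rotation estimate (see \cite{Ja-76,Ha-77}): using the energy $\tfrac12 y^{2}+\lambda\int_{0}^{x}g$ and $g(t,x)/x\to+\infty$, the duration of each half–turn of a large orbit tends to $0$; the same estimates show that for $\beta$ large this orbit stays outside $\mathcal N$, so the modification does not affect its return times. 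With $\alpha<\beta$ and \eqref{twist-xy} in force, Theorem~\ref{PB-thm} applies.

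When $\partial_{t}g$ exists and is continuous I would take $H^{\ast}$ of class $\mathcal C^{1}$ in all variables, so Theorem~\ref{PB-thm} produces two distinct $kT$-periodic solutions $z^{(i)}=(x^{(i)},y^{(i)})$ of the $H^{\ast}$-system with $\Rot(z^{(i)},[0,kT])=m$ and $\{y^{(i)}(t):t\in[0,T[\}\cap(\alpha,\beta)\neq\emptyset$. It then remains to see that each $z^{(i)}$ stays outside $\mathcal N$, whence, $H^{\ast}$ coinciding with $H$ there, $z^{(i)}$ solves $\dot x=y$, $-\dot y=\lambda g(t,x)$, i.e.\ $x^{(i)}$ solves \eqref{lambda-eq}. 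Indeed, if $z^{(i)}$ entered the inner ball it would get trapped on an invariant circle, giving rotation number $m/2\neq m$ over $[0,kT]$ and contradicting \eqref{Rot-x}; comparing the rotation accumulated inside $\mathcal N$ with \eqref{Rot-x} excludes meeting $\mathcal N$ at all. Finally $z^{(i)}(t)\neq0$ for all $t$, and since $\dot\theta>0$ and $\Rot(z^{(i)},[0,kT])=m$, the component $x^{(i)}$ vanishes exactly $2m$ times in $[0,kT[$, every zero being simple because there $\dot x^{(i)}=y^{(i)}\neq0$. This proves the second assertion.

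For the first assertion, with $g$ merely continuous, I would approximate $g$ uniformly on compact sets by $\mathcal C^{1}$ functions $g_{n}$, $T$-periodic in $t$, satisfying \eqref{g-superlinear} strongly enough that $M$, $\mathcal N$, $\alpha$, $\beta$ and $\lambda_{m,k}$ above can be chosen independent of $n$ for $n$ large. The second assertion gives $kT$-periodic solutions $x_{n}$ of $\ddot x+\lambda g_{n}(t,x)=0$ with $2m$ simple zeros and orbit outside $\mathcal N$; the superlinear a priori bounds, together with $\{y_{n}(t):t\in[0,T[\}\cap(\alpha,\beta)\neq\emptyset$, bound $\|x_{n}\|_{\infty}+\|\dot x_{n}\|_{\infty}$ uniformly, so a subsequence converges in $\mathcal C^{1}$ to a $kT$-periodic solution $x$ of \eqref{lambda-eq} with orbit still outside $\mathcal N$; hence $\Rot(z,[0,kT])=m$ and $x$ has $2m$ simple zeros in $[0,kT[$. (Only one solution survives the passage to the limit, since $x_{n}^{(1)}$ and $x_{n}^{(2)}$ may coalesce.) I expect the main obstacle to be the construction of $H^{\ast}$ together with the proof that the solutions given by Theorem~\ref{PB-thm} avoid the modified region — exactly where the lack of an equilibrium must be handled — and, secondarily, making the superlinear estimates uniform in $t_{0}\in[0,T]$ and in $n$.
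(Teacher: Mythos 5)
Your overall strategy follows the paper's blueprint from Remark~\ref{rem-2.4}: modify near the origin to restore \ref{posit}, invoke Theorem~\ref{PB-thm} for the successor map, then show that the periodic solutions avoid the modified region, and finally mollify $g$ to drop the $\partial_t g$ hypothesis. However, there are two genuine gaps, and both trace back to the fact that your inner radius $\alpha$ lies \emph{inside} the modified region, whereas the paper is careful to place it far outside.

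First, the exclusion argument does not close. You place $\alpha$ inside the inner ball where $H^{\ast}=\omega|z|^{2}$, so Theorem~\ref{PB-thm} only tells you that each solution $z^{(i)}$ has $y^{(i)}(t)\in\mathopen{]}\alpha,\beta\mathclose{[}$ for some $t$; that value may well lie in the transition annulus $\mathcal N\setminus\{\text{inner ball}\}$, where $H^{\ast}\neq H$. Your invariant-circle trap rules out the inner ball only; the phrase ``comparing the rotation accumulated inside $\mathcal N$ with \eqref{Rot-x} excludes meeting $\mathcal N$ at all'' is not an argument, since a trajectory could accumulate most of its $m$ rotations outside $\mathcal N$ and still dip into the transition annulus repeatedly without contradicting $\Rot(z^{(i)},[0,kT])=m$. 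The paper avoids this entirely: $\alpha=Y_{m}$ is chosen so large that the spiral guiding curves of Step~1a (Propositions~\ref{prop-xx} and~\ref{apb}) guarantee \emph{a priori} that any orbit with $y_{0}\geq Y_{m}$ stays in $\{x^{2}+y^{2}>\sigma_{1}\}$ for all of $[t_{0},\TT^{m}_\lambda(t_{0},y_{0})]$; since the found solutions are $kT$-periodic and $\Rot=m$, this pins the entire orbit outside the modification. You have no analogue of that localization lemma, and it is precisely the hard part you yourself flag as the ``main obstacle.''

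Second, your small-$\lambda$ lower twist is obtained by a mechanism that conflicts with the $\mathcal C^{1}$-patching of $H^{\ast}$. You keep the inner Hamiltonian $\omega|z|^{2}$ \emph{fixed} while the outer one $\tfrac12 y^{2}+\lambda G(t,x)$ collapses to $\tfrac12 y^{2}$ as $\lambda\to 0$; near $y=0$ these values are $O(1)$ versus $O(\lambda)$, and any $\mathcal C^{1}$ interpolation in the transition annulus must have $\partial_{x}H^{\ast}$ changing sign there in a way that threatens \ref{posit}. The paper sidesteps this by the rescaling $x'=\sqrt{\lambda}\,y$, $-y'=\sqrt{\lambda}\,g(t,x)$: the factor $\sqrt{\lambda}$ is common to the whole Hamiltonian $\widetilde{\HH}$, the inner quadratic is $\tfrac12(cx^{2}+y^{2})$ (not radial) matched against $G(t,x)\geq\tfrac12 cx^{2}$ from \eqref{choiceG}, and the lower twist estimate in Proposition~\ref{PB-int} comes from the uniform bound $|\theta'|\leq\sqrt{\lambda}\,\Theta_{m}$ on the compact $\mathcal K_{m}$ lying \emph{outside} the modified region. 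In short, the paper obtains slowness from the global $\sqrt{\lambda}$-scaling on a $\lambda$-independent compact set, not from a slow inner vortex, which is what allows $\alpha$ to be chosen outside the modification in the first place. Until you replace the inner-vortex device by something like the spiral-energy estimates $H_{1},H_{2}$ and the a priori inclusion $\{y^{(i)}\text{-orbit}\}\subseteq\{x^{2}+y^{2}>\sigma_{1}\}$, both the exclusion step and the uniform-in-$n$ bounds for the mollification step are unsupported.
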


It can be easily seen that Corollary~\ref{cor-intro} is an immediate consequence of the previous statement. Notice that the nodal properties of the periodic solutions can give information about their minimal period. For instance, taking $m=1$, the minimal period will surely be equal to $kT$.

\proof[Proof of Theorem~\ref{THM_lambda}]
We will first provide the proof when 
$\partial_t g$ exists and is continuous.
Notice that the differential equation \eqref{lambda-eq} can be written as the planar system
\begin{equation}\label{HS-lambda}
\begin{cases}
\phantom{-}x' = \sqrt\lambda\, y,
\\
-y' = \sqrt\lambda \, g(t,x).
\end{cases}
\end{equation}
This is a Hamiltonian system with Hamiltonian function
\begin{equation*}
H_\lambda(t,x,y)= \sqrt\lambda \, \left( \tfrac{1}{2} y^2 + G(t,x) \right),
\end{equation*}
where $G(t,x)$ is such that  $\frac{\partial}{\partial x}G(t,x)= g(t,x)$.

Recalling the modified polar coordinates in \eqref{modpolcor}, the angular velocity is given by
\begin{equation}
\label{angvel}
\dot\theta(t)= \sqrt\lambda \ \frac{y^2(t) + x(t) g(t,x(t))}{x^2(t)+y^2(t)}.
\end{equation}

Exploiting \eqref{g-superlinear}, we deduce that there exists $\overline r\geq 1$
such that
\begin{equation}\label{old-good-sign}
x g(t,x)\geq x^2, \quad \text{for every $(t,x)\in\mathbb{R}^{2}$ with $|x|\geq \overline r$.}
\end{equation}
Then, since $xg(t,x)$ is bounded when $|x|\leq \overline r$, we have the following.

\begin{proposition}\label{theta'pos}
There exist $c\in\mathopen{]}0,1\mathclose{[}$ and $r_0\geq \overline r$ such that
\begin{equation*}
\frac{y^2 + x g(t,x)}{x^2+y^2}\geq  c,
\quad \text{for every $(t,x,y)\in\mathbb{R}\times\mathbb{R}^{2}$ with $x^2+y^2\geq r_0^2$.}
\end{equation*}
\end{proposition}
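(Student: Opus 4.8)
The plan is to split the estimate according to whether $|x|$ is large or small and to show that $y^2+x\,g(t,x)$ is comparable to $x^2+y^2$ in each regime. First I would record the elementary consequence of the continuity and $T$-periodicity of $g$: since $\{(t,x)\colon t\in[0,T],\ |x|\le \overline r\}$ is compact and $xg(t,x)$ is $T$-periodic in $t$, the quantity $M:=\sup\{|x\,g(t,x)|\colon t\in\mathbb R,\ |x|\le\overline r\}$ is finite.

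Next, I would fix once and for all any $c\in\mathopen]0,1\mathclose[$ (for concreteness $c=\tfrac12$) and choose $r_0\ge\overline r$ at the end. Take $(t,x,y)$ with $x^2+y^2\ge r_0^2$. If $|x|\ge\overline r$, then \eqref{old-good-sign} gives $x\,g(t,x)\ge x^2$, hence $y^2+x\,g(t,x)\ge x^2+y^2$ and the quotient is $\ge 1\ge c$. If instead $|x|<\overline r$, then $x\,g(t,x)\ge -M$ and $x^2<\overline r^2$, so that $y^2\ge (x^2+y^2)-\overline r^2\ge r_0^2-\overline r^2$, and therefore
\[
\frac{y^2+x\,g(t,x)}{x^2+y^2}\ \ge\ \frac{y^2-M}{\overline r^2+y^2}\ \ge\ \frac{(r_0^2-\overline r^2)-M}{r_0^2},
\]
where the last inequality uses that $s\mapsto \frac{s-M}{\overline r^2+s}$ is increasing. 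Since the right-hand side tends to $1$ as $r_0\to+\infty$, one can pick $r_0\ge\overline r$ so large that it is $\ge c$, which yields the claim.

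There is essentially no substantive difficulty in this argument; the only point deserving a word of care is that the bound $M$ on $|x\,g(t,x)|$ over the strip $|x|\le\overline r$ must be \emph{uniform in $t\in\mathbb R$}, and this is precisely what the $T$-periodicity in $t$ grants by reducing the supremum to a compact interval of times. Everything else is a routine monotonicity computation.
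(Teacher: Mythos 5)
Your argument is correct and is precisely the elaboration the paper intends: the paper states the proposition immediately after noting that $x\,g(t,x)$ is bounded on $|x|\leq\overline r$ (uniformly in $t$, by $T$-periodicity and compactness) and that $x\,g(t,x)\geq x^2$ for $|x|\geq\overline r$ by \eqref{old-good-sign}, leaving the routine two-case estimate unwritten. Your split by $|x|\gtrless\overline r$ and the monotonicity computation fill in exactly that gap.
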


The remaining part of the proof is divided into two steps.
In Step~1 we construct some guiding curves in the phase plane, independently of $\lambda$, which will control the behaviour of the solutions while rotating around the origin.
In Step~2 we suitably modify the original system so to introduce an equilibrium at the origin. Then, the Poincar\'e--Birkhoff theorem for the successor map applies to this modified system, providing the multiplicity of periodic solutions. The proof is finally completed by checking that these periodic solutions are indeed solutions of the original system.

\smallskip

\paragraph{Step~1. The construction of some guiding curves.}

We define the continuous functions $g_1,g_2 \colon \mathbb{R} \to \mathbb{R}$ as
\begin{equation}\label{who-are-g}
g_1(x):= \min_{t\in[0,T]} g(t,x)-1 < g(t,x) < g_2(x):= \max_{t\in[0,T]} g(t,x)+1 ,
\end{equation}
and their primitives $G_i(x):=\int_0^x g_i(s) \,\mathrm{d}s$, $i=1,2$. We set
\begin{equation*}
H_i(x,y)= \tfrac12 y^2 + G_i(x) ,
\end{equation*}
and note that
\begin{equation}\label{ordering}
\begin{aligned}
G_1(x) > G_2(x), &\quad\text{for every $x\in\mathopen{]}-\infty,0\mathclose{[}$,}
\\
G_1(x) < G_2(x), &\quad\text{for every $x\in\mathopen{]}0,+\infty\mathclose{[}$.}
\end{aligned}
\end{equation}
Furthermore, since
\begin{equation*}
\lim_{x\to\pm\infty} \frac{G_i(x)}{x^2} =\lim_{x\to\pm\infty} \frac{g_i(x)}{x} =+\infty,
\end{equation*}
there exists a positive constant $E_0$ such that, for every $E\geq E_0$ and $i=1,2$, the sublevel sets $\{H_i \leq E\}$ are star-shaped with respect to the origin and
\begin{equation}\label{outside}
\{(x,y) \colon H_i(x,y)=E\}\subseteq \{(x,y) \colon x^2+y^2> r_0^2+1\}.
\end{equation}
Moreover, the set $\{H_i=E\}$ intersects every semi-axis at a unique point.

\smallskip

\paragraph{Step~1a. Entering solution guiding curve.}
Let us construct a guiding curve $\bighat\gamma$ in the phase plane having the shape of a spiral which performs $m+1$ {\em counterclockwise} rotations around the origin passing through some points $\bighat P_j=(0,\bighat y_j)$, with 
\begin{equation*}
0<\bighat y_1<\dots<\bighat y_j< \bighat y_{j+1} < \dots < \bighat y_{m+2}.
\end{equation*}

As a starting point we can consider any 
\begin{equation}\label{start-hat}
\bighat P_1=(0,\bighat y_1), \quad \text{with $\bighat y_1>\sqrt{2E_0}$,}
\end{equation}
so that $H_2(0,\bighat y_1)>E_0$. 
Once the value $\bighat y_j$ is chosen, we can define the $j$-th rotation of the spiral by gluing three different subsets $\bighat \AA_{j,1}$, $\bighat \AA_{j,2}$, and $\bighat \AA_{j,3}$ (see Figure~\ref{hatspiral}a).
The first part is
\begin{equation*}
\bighat \AA_{j,1} = \bigl{\{} H_2=\tfrac12 \bighat y_j^2 \bigr{\}}\cap\bigl{\{}
x\leq 0, \; y\geq 0\bigr{\}},
\end{equation*}
and links the point $\bighat P_j$ to the point $\bighat Q_j=(\bighat x_j,0)$, with $\bighat x_j<0$ satisfying $G_2(\bighat x_j)=\frac12 \bighat y_j^2$. The second part is
\begin{equation*}
\bighat \AA_{j,2} = \{H_1=H_1(0,\bighat x_j)\}\cap\{ y\leq 0\},
\end{equation*}
and links the point $\bighat Q_j$ to the point $\bighat Q_j'=(\bighat x_j',0)$, with $\bighat x_j'>0$ satisfying $G_1(\bighat x_j')=G_1(\bighat x_j)$. Finally, 
\begin{equation*}
\bighat \AA_{j,3} = \{H_2=H_2(\bighat x_j',0)\}\cap\{ x\geq 0 , y\geq 0\}
\end{equation*}
links the point $\bighat Q_j'$ to the point $\bighat P_{j+1}=(0,\bighat y_{j+1})$, with $\bighat y_{j+1}>0$ satisfying $\frac12 \bighat y_{j+1}^2=G_2(\bighat x_j')$.

By construction, recalling \eqref{ordering}, we have
\begin{equation*}
\tfrac12 \bighat y_j^2 = G_2(\bighat x_j)<G_1(\bighat x_j)=G_1(\bighat x_j')< G_2(\bighat x_j') = \tfrac12 \bighat y_{j+1}^2,
\end{equation*}
hence $\bighat y_{j+1}>\bighat y_j$.
Introducing the segment
\begin{equation*}
\bighat \LL_j = \{(0,y) \colon \bighat y_j \leq y< \bighat y_{j+1} \},
\end{equation*}
the set $\bighat \CC_j = \bighat \AA_{j,1}\cup\bighat \AA_{j,2}\cup\bighat \AA_{j,3}\cup\bighat \LL_j$
is a Jordan curve separating an interior open bounded region $\bighat \II_j$ and an exterior open unbounded region $\bighat \EE_j$.
Note that, since we have assumed \eqref{start-hat}, by \eqref{outside} we have
\begin{equation}\label{inclusion}
\{x^2+y^2\leq r_0^2+1\} \subseteq \bighat \II_1 \subseteq \bighat \II_2 \subseteq \ldots \subseteq \bighat \II_{m+1}.
\end{equation}

\begin{figure}[t]
\centering
a)
\begin{tikzpicture}[x=3mm,y=3mm]
\fill [color=interno,line width=0.8pt] plot [smooth] coordinates {(0,3) (-2.1,2.6) (-3.2,1.8)  (-4,0) (-3.5,-2) (-2,-4)(0,-5) (2,-5) (4.4,-3.9) (6,0) (5,4) (3,6) (0.1,7) (0,7) (0,6.9) (0,3)};
\draw [color=gray,line width=0.8pt] plot [smooth] coordinates {(0,3) (-2.1,2.6) (-3.2,1.8)  (-4,0) (-3.5,-2) (-2,-4)(0,-5) (2,-5) (4.4,-3.9) (6,0) (5,4) (3,6) (0.1,7) (0,7)};
\filldraw[fill=palla,line width=0.8pt] (0,0) circle (1.5);
\draw[->,line width=0.8pt] (-8,0) -- (9,0);
\draw (9.6,0) node {$x$};
\draw[->,line width=0.8pt] (0,-8) -- (0,10.5);
\draw (0,11.1) node {$y$};
\draw (1,3) node {$\bighat P_j$};
\draw (-5,-1) node {$\bighat Q_j$};
\draw (7,-1) node {$\bighat Q_j'$};
\draw (1.5,8) node {$\bighat P_{j+1}$};
\draw (-4.3,3) node {$\bighat \AA_{j,1}$};
\draw (1.5,-6) node {$\bighat \AA_{j,2}$};
\draw (5,6) node {$\bighat \AA_{j,3}$};
\draw (-1,6) node {$\bighat \LL_j$};
\fill (0,3) circle (1.2pt);
\fill (-4,0) circle (1.2pt);
\fill (6,0) circle (1.2pt);
\fill (0,7) circle (1.2pt);
\draw[->,color=black,line width=0.8pt] (4.5,3.5) -- (5.5,4.5);
\draw[->,color=black,line width=0.8pt] (-1.5,-3.5) -- (-2.5,-4.5);
\draw[->,color=black,line width=0.8pt] (3.7,-3.7) -- (4.7,-4.7);
\draw[->,color=black,line width=0.8pt] (-1.5,2.2) -- (-2.5,3.2);
\draw[->,color=black,line width=0.8pt] (-0.3,5) -- (1,5);
\end{tikzpicture}
\qquad
b)
\begin{tikzpicture}[x=3mm,y=3mm]
\fill [color=interno,line width=0.8pt] plot [smooth] coordinates {(0,5.6) (-3,5) (-5.5,2) (-6,0) (-5,-3) (-2.5,-5.5)
(0,-6.3) (6,-4.5) (8,0) (6,5.5) (2,8) (0,8.4)};
\fill [color=white,line width=0.8pt] plot [smooth] coordinates 
{(0,3) (-1,2.8) (-2.8,1.7) (-3.3,0) (-2.8,-1.9) (-1.5,-3.3) (0,-3.6) (2.5,-3.3) (4,-2.5) (5.3,0) (4.3,3.3) (2,5) (0,5.6)};
\draw [color=gray,line width=0.8pt] plot [smooth] coordinates {(0,3) (-1,2.8) (-2.8,1.7) (-3.3,0) (-2.8,-1.9) (-1.5,-3.3) (0,-3.6) (2.5,-3.3) (4,-2.5) (5.3,0) (4.3,3.3) (2,5) (0,5.6) (-3,5) (-5.5,2) (-6,0) (-5,-3) (-2.5,-5.5) (0,-6.3) (6,-4.5) (8,0) (6,5.5) (2,8) (0,8.4)};
\draw [dashed, color=FFS-2,line width=0.8pt] plot [smooth] coordinates {(2,1) (2.5,3) (0,4.3) (-2,3.8) (-3.8,2) (-4.6,0)  (-3.5,-3) (-2,-4.5) (0,-5) (4,-4) (6.6,0) (5,4.3) (0,7) (-2,9)};
\draw[->,color=FFS-2,line width=0.8pt] (4.65,4.6) -- (4.9,4.4);
\draw[->,color=FFS-2,line width=0.8pt] (4.6,-3.44) -- (4.41,-3.64);
\draw[->,color=FFS-2,line width=0.8pt] (-3.46,-3.05) -- (-3.6,-2.83);
\draw[->,color=FFS-2,line width=0.8pt] (-3.27,2.65) -- (-3.05,2.9);
\filldraw[fill=palla, line width=0.8pt] (0,0) circle (1.5);
\draw[->,line width=0.8pt] (-8,0) -- (9,0);
\draw (9.6,0) node {$x$};
\draw[->,line width=0.8pt] (0,-8) -- (0,10.5);
\draw (0,11.1) node {$y$};
\draw (3,1) node {\small $s_1$};
\draw (-5,6) node {\small $\tau_1=\sigma_1$};
\draw (6,8) node {\small $\tau_0=\sigma_2$};
\draw (-3,9) node {\small $s_0$};
\draw (2,-2) node {$\bighat \II_1$};
\draw (7,6) node {$\bighat \EE_2$};
\draw[color=black,line width=0.5pt] (4,8) -- (0,7);
\draw[color=black,line width=0.5pt] (-3,6) -- (0,4.3);
\fill (2,1) circle (1.2pt);
\fill (0,4.3) circle (1.2pt);
\fill (0,7) circle (1.2pt);
\fill (-2,9) circle (1.2pt);
\end{tikzpicture}
\caption{a) The set $\bighat \CC_j$ delimiting the interior regions $\bighat\II_j$ and the exterior regions $\bighat\EE_j$. The arrows suggest the direction of the vector field associated with \eqref{HS-lambda}, see Lemma~\ref{lem-xx}. b) A sketch of Proposition~\ref{prop-xx} in the case $m=1$.}
\label{hatspiral}
\end{figure}

\begin{lemma}\label{lem-xx}
Let $z=(x,y)$ be a solution of \eqref{HS-lambda} such that, for a certain $t_0$, we have 
$z(t)\in \bighat \EE_j$
in a left neighborhood of $t_0$ and
$z(t)\in \bighat \II_j$
in a right neighborhood of $t_0$. Then $z(t_0)\in \bighat \LL_j$.
\end{lemma}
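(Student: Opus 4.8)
The statement is a transversality-type fact: the Jordan curve $\bighat\CC_j = \bighat\AA_{j,1}\cup\bighat\AA_{j,2}\cup\bighat\AA_{j,3}\cup\bighat\LL_j$ can only be crossed \emph{inward} (from $\bighat\EE_j$ to $\bighat\II_j$) through the segment $\bighat\LL_j$ on the positive $y$-axis; on the three arcs $\bighat\AA_{j,i}$ the vector field of \eqref{HS-lambda} points outward (or at worst tangentially in a way that cannot produce an inward crossing). So the plan is to go arc by arc and compute the sign of the derivative of the relevant ``energy'' level function along solutions.

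First I would set up the bookkeeping. On $\bighat\AA_{j,1}$ and $\bighat\AA_{j,3}$ the curve is a level set of $H_2(x,y)=\tfrac12 y^2 + G_2(x)$, while on $\bighat\AA_{j,2}$ it is a level set of $H_1(x,y)=\tfrac12 y^2 + G_1(x)$. Along a solution $z=(x,y)$ of \eqref{HS-lambda} one computes
\begin{equation*}
\frac{\mathrm{d}}{\mathrm{d}t} H_i(x(t),y(t)) = \sqrt\lambda\, y\,\bigl(g_i(x) - g(t,x)\bigr),
\end{equation*}
using $x' = \sqrt\lambda\,y$ and $-y' = \sqrt\lambda\, g(t,x)$. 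Now recall \eqref{who-are-g}: $g_1(x) < g(t,x) < g_2(x)$ for all $t$. Hence on the arcs governed by $H_2$ (namely $\bighat\AA_{j,1}$, where $y\ge 0$, and $\bighat\AA_{j,3}$, where $y\ge 0$) we get $\frac{\mathrm{d}}{\mathrm{d}t}H_2 = \sqrt\lambda\, y\,(g_2(x)-g(t,x)) \ge 0$, with strict inequality when $y>0$; since $\bighat\II_j$ is the \emph{sublevel} region and $\bighat\EE_j$ the superlevel region, $H_2$ non-decreasing means a solution cannot pass from $\bighat\EE_j$ to $\bighat\II_j$ across these arcs. On the arc $\bighat\AA_{j,2}$ (governed by $H_1$, where $y\le 0$) we get $\frac{\mathrm{d}}{\mathrm{d}t}H_1 = \sqrt\lambda\, y\,(g_1(x)-g(t,x)) \ge 0$ as well, because now $y\le 0$ and $g_1(x)-g(t,x)<0$; again the solution cannot cross inward there. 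I would also note that the star-shapedness of the sublevel sets guarantees each arc is a genuine boundary piece of $\bighat\II_j$ separating interior from exterior in the expected way, so the sign computations really do control the $\bighat\EE_j\to\bighat\II_j$ transition.

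Next I would handle the corner points $\bighat Q_j=(\bighat x_j,0)$ and $\bighat Q_j'=(\bighat x_j',0)$ where two arcs are glued, to make sure the solution cannot sneak from exterior to interior through a vertex. At $\bighat Q_j$ we have $x=\bighat x_j<0$, $y=0$: here $H_1$ and $H_2$ are both relevant, and since $\bighat x_j \le -\overline r \le -1$ (because the level $\tfrac12\bighat y_j^2 \ge \tfrac12\bighat y_1^2 > E_0$ forces $\bighat Q_j$ outside $\{x^2+y^2\le r_0^2+1\}$ by \eqref{outside}), the sign condition \eqref{old-good-sign} gives $x\,g(t,x)\ge x^2 > 0$, hence $g(t,\bighat x_j) < 0$; from $-y' = \sqrt\lambda\,g(t,\bighat x_j)$ we get $y'>0$, so the solution moves from $y<0$ to $y>0$ near $\bighat Q_j$, i.e.\ from the $\bighat\AA_{j,2}$ side back toward the $\bighat\AA_{j,1}$ side, consistent with outward flow. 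A symmetric computation at $\bighat Q_j' = (\bighat x_j',0)$ with $\bighat x_j' \ge \overline r \ge 1$ gives $g(t,\bighat x_j')>0$, hence $y'<0$, again the ``right'' direction. Finally, on the segment $\bighat\LL_j\subseteq\{x=0,\ y>0\}$ we have $x'=\sqrt\lambda\,y>0$, so the solution moves to the right, i.e.\ into the half-plane $x>0$; a short argument using the picture (or directly: just left of $\bighat\LL_j$ is $\bighat\EE_j$, just right is $\bighat\II_j$) shows this is precisely an $\bighat\EE_j\to\bighat\II_j$ crossing. Assembling these: the only place where a solution can go from $\bighat\EE_j$ to $\bighat\II_j$ is $\bighat\LL_j$, which is the claim.

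The main obstacle I anticipate is not any single computation but the careful geometric bookkeeping: matching ``sublevel vs.\ superlevel'' with ``interior vs.\ exterior'' consistently on each of the three arcs (the arcs live in different quadrants and two of them are level sets of $H_2$, one of $H_1$, with the orderings in \eqref{ordering} switching sign across the $y$-axis), and correctly treating the gluing points $\bighat Q_j,\bighat Q_j'$ and the endpoints of $\bighat\LL_j$ so that no inward crossing is missed at a corner. Once the orientation conventions are pinned down, each arc reduces to the one-line energy derivative above, and the endpoint analysis to the sign condition \eqref{old-good-sign}.
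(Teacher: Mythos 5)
Your proof takes the same route as the paper's: compute the energy derivative $\frac{\mathrm{d}}{\mathrm{d}t}H_i(x(t),y(t))=\sqrt\lambda\,y\bigl(g_i(x)-g(t,x)\bigr)$ along solutions, observe via $g_1<g<g_2$ that $H_2$ is nondecreasing where $y\geq 0$ and $H_1$ is nondecreasing where $y\leq 0$, and conclude that an $\bighat\EE_j\to\bighat\II_j$ transition cannot occur across any of the arcs $\bighat\AA_{j,\kappa}$, hence must occur on $\bighat\LL_j$. The paper's own proof records only the derivative formula \eqref{H-der} and the two sign inequalities before asserting the conclusion ``by the properties of the sets''; your version correctly fills in that last step (matching interior with sublevel on each arc, the corner points $\bighat Q_j,\bighat Q_j'$, and the crossing direction on $\bighat\LL_j$), so it is the same argument, spelled out in more detail.
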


\begin{proof}
We can compute the derivative of the energy $H_i$ along the trajectories of system \eqref{HS-lambda}
\begin{equation}\label{H-der}
\dfrac{\mathrm{d}}{\mathrm{d}t} H_i(x(t),y(t)) = \sqrt\lambda y(t) \big( g_i(x) - g(t,x) \big),
\end{equation}
so that
\begin{equation*}
\dfrac{\mathrm{d}}{\mathrm{d}t} H_1(x(t),y(t)) > 0, \; \text{ if $y(t)<0$,} \qquad
\dfrac{\mathrm{d}}{\mathrm{d}t} H_2(x(t),y(t)) > 0, \; \text{ if $y(t)>0$.}
\end{equation*}
Then, by the properties of the sets $\bighat \AA_{j,\kappa}$ we easily conclude.
\end{proof}

\begin{proposition}\label{prop-xx}
Let $z=(x,y)$ be a solution of \eqref{HS-lambda} and $s_0<s_1$ be such that 
$z(s_0)\in \bighat \EE_{m+1}$
and 
$z(s_1)\in \bighat \II_1$.
Then, there are $\tau_0$ and $\tau_1$, with $s_0\leq \tau_0<\tau_1\leq s_1$, with the following property:
$z(\tau_0)\in \bighat \LL_{m+1}$, 
$z(\tau_1)\in \bighat \LL_1$,
and $\Rot(z,[\tau_0,\tau_1])\geq m$.
\end{proposition}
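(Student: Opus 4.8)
The plan is to track the solution $z=(x,y)$ as it passes from the far exterior region $\bighat\EE_{m+1}$ to the innermost interior region $\bighat\II_1$, and to use the nested structure \eqref{inclusion} of the Jordan curves $\bighat\CC_j$ together with the transversality information from Lemma~\ref{lem-xx} to extract the crossing instants. First I would observe that, since $z(s_0)\in\bighat\EE_{m+1}$ and $z(s_1)\in\bighat\II_1\subseteq\bighat\II_{m+1}$, the solution must cross the curve $\bighat\CC_{m+1}$ at some time in $\mathopen]s_0,s_1\mathclose[$; taking
\begin{equation*}
\tau_0=\sup\{t\in[s_0,s_1] \colon z(t)\in\overline{\bighat\EE_{m+1}}\},
\end{equation*}
one has $z(\tau_0)\in\bighat\CC_{m+1}$ and $z(t)\in\bighat\II_{m+1}$ for $t\in\mathopen]\tau_0,s_1\mathclose]$, at least in a right neighborhood of $\tau_0$. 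By Lemma~\ref{lem-xx}, applied with $j=m+1$, the crossing point must lie on the segment $\bighat\LL_{m+1}$, so $z(\tau_0)\in\bighat\LL_{m+1}$. Symmetrically, setting
\begin{equation*}
\tau_1=\inf\{t\in[\tau_0,s_1] \colon z(t)\in\overline{\bighat\II_1}\},
\end{equation*}
the solution crosses $\bighat\CC_1$ at $\tau_1$ coming from outside $\bighat\II_1$; again Lemma~\ref{lem-xx} (with $j=1$) forces $z(\tau_1)\in\bighat\LL_1$. By construction $s_0\leq\tau_0<\tau_1\leq s_1$, and on $\mathopen]\tau_0,\tau_1\mathclose[$ the solution stays outside $\bighat\II_1$, in particular outside the disk $\{x^2+y^2\leq r_0^2+1\}$ by \eqref{inclusion}, so $z(t)\neq0$ there and the rotation number $\Rot(z,[\tau_0,\tau_1])$ is well-defined.

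It remains to prove the rotation estimate $\Rot(z,[\tau_0,\tau_1])\geq m$. The idea is that between $\tau_0$ and $\tau_1$ the trajectory passes from $\bighat\LL_{m+1}$ to $\bighat\LL_1$, and since both segments lie on the positive $y$-semiaxis, the angular coordinate $\theta$ (in the modified polar coordinates \eqref{modpolcor}) starts and ends at values congruent to $0\pmod{2\pi}$. By \ref{hp-A-1}, or more precisely by \eqref{angvel} together with Proposition~\ref{theta'pos} — which applies because the solution remains in $\{x^2+y^2\geq r_0^2\}$ on this interval — the angle $\theta(t)$ is strictly increasing, so $\Rot(z,[\tau_0,\tau_1])$ is a positive integer. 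To see that it is at least $m$, I would argue that the solution must successively exit each of the nested regions $\bighat\II_{m+1}\supseteq\bighat\II_m\supseteq\cdots\supseteq\bighat\II_1$ through the corresponding segments $\bighat\LL_{m+1},\bighat\LL_m,\ldots,\bighat\LL_1$: starting at $\tau_0$ on $\bighat\LL_{m+1}\subseteq\overline{\bighat\II_{m+1}}$ and ending at $\tau_1$ inside $\bighat\II_1$, it must at some intermediate time cross each boundary $\bighat\CC_j$ (for $j=m,m-1,\dots,1$) while entering $\bighat\II_j$ from $\bighat\EE_j$, and by Lemma~\ref{lem-xx} each such crossing occurs on $\bighat\LL_j\subseteq\{x=0,\,y>0\}$. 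Since $\theta$ is strictly monotone and passes through $m+1$ points on the positive $y$-axis (at $\tau_0$ on $\bighat\LL_{m+1}$, then on $\bighat\LL_m,\dots,\bighat\LL_1$), the total angular increment is at least $2\pi m$, which gives the claim.

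The main obstacle I anticipate is making the last monotonicity-and-crossing argument fully rigorous: one has to be careful that the solution does not oscillate back and forth across the curves $\bighat\CC_j$ in a way that would spoil the counting. This is precisely where strict monotonicity of $\theta$ is essential — because $\dot\theta>0$, once the trajectory crosses $\bighat\LL_j$ and enters $\bighat\II_j$ it cannot return to $\bighat\LL_j$ (which sits at angle $\equiv0$) without first increasing $\theta$ by a full $2\pi$, and a careful bookkeeping shows that between consecutive passages through the positive $y$-axis the angle increases by exactly $2\pi$. Combined with the observation that the nested curves are crossed in the order $m+1,m,\dots,1$ — which follows because $\bighat\II_1\subseteq\cdots\subseteq\bighat\II_{m+1}$ and the solution ends up in $\bighat\II_1$ — one gets the lower bound $m$ on the number of completed revolutions. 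The transversality supplied by Lemma~\ref{lem-xx}, which pins every crossing of $\bighat\CC_j$ to the ``free'' segment $\bighat\LL_j$ rather than to the energy arcs $\bighat\AA_{j,\kappa}$, is the technical heart that makes this bookkeeping work.
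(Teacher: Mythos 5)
Your proof is correct and takes essentially the same route as the paper: extract crossing instants on each segment $\bighat\LL_j$ via Lemma~\ref{lem-xx}, then use the strict positivity of $\dot\theta$ (Proposition~\ref{theta'pos}, valid since the trajectory stays outside $\{x^2+y^2\le r_0^2+1\}$) to conclude $\Rot(z,[\tau_0,\tau_1])\geq m$. The paper's version is terser — it simply asserts the ordered sequence $\sigma_{m+1}<\cdots<\sigma_1$ with $z(\sigma_j)\in\bighat\LL_j$ and invokes Proposition~\ref{theta'pos} — whereas you supply the sup/inf construction of $\tau_0,\tau_1$ and the explicit $2\pi$-per-crossing bookkeeping that the paper leaves implicit.
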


\begin{proof}
From Lemma~\ref{lem-xx}, we deduce the existence of some instants $\sigma_j\in\mathopen{]}s_0,s_1\mathclose{[}$, with $j\in\{1,\dots,m+1\}$, and $\sigma_{j+1}<\sigma_j$ such that $z(\sigma_j)\in \bighat\LL_j$.
Then, recalling Proposition~\ref{theta'pos} we conclude.
\end{proof}

\smallskip

\paragraph{Step~1b. Exiting solution guiding curve.}
We now construct a spiral $\bigcheck\gamma$  which performs $m+1$ {\em clockwise} rotations around the origin passing through some points $\bigcheck P_j=(0,\bigcheck y_j)$, with
\begin{equation*}
0<\bigcheck y_1<\dots<\bigcheck y_j< \bigcheck y_{j+1} < \dots < \bigcheck y_{m+2}.
\end{equation*}

As a starting point, we can consider any 
\begin{equation}\label{start-check}
\bigcheck P_1=(0,\bigcheck y_1), \quad \text{with }\bigcheck y_1>\sqrt{2E_0},
\end{equation}
so that $H_2(0,\bigcheck y_1)>E_0$. 
Once the value $\bigcheck y_j$ is chosen, we can define the $j$-th rotation of the spiral by gluing three different subsets $\bigcheck \AA_{j,1}$, $\bigcheck \AA_{j,2}$, and $\bigcheck \AA_{j,3}$  (see Figure~\ref{checkspiral}a).
The first one
\begin{equation*}
\bigcheck \AA_{j,1} = \{H_1=\tfrac12 \bigcheck y_j^2\}\cap\{x\geq 0 , y\geq 0\}
\end{equation*}
links the point $\bigcheck P_j$ to $\bigcheck Q_j=(\bigcheck x_j,0)$, with $\bigcheck x_j>0$ satisfying $G_1(\bigcheck x_j)=\frac12 \bigcheck y_j^2$. The second one is
\begin{equation*}
\bigcheck \AA_{j,2} = \{H_2=H_2(0,\bigcheck x_j)\}\cap\{y\leq 0\},
\end{equation*}
and links $\bigcheck Q_j$ to  $\bigcheck Q_j'=(\bigcheck x_j',0)$, with $\bigcheck x_j'<0$ satisfying $G_2(\bigcheck x_j')=G_2(\bigcheck x_j)$. Finally, 
\begin{equation*}
\bigcheck \AA_{j,3} = \{H_1=H_1(\bigcheck x_j',0)\}\cap\{x\leq 0 , y\geq 0\}
\end{equation*}
links the point $\bigcheck Q_j'$ to  $\bigcheck P_{j+1}=(0,\bigcheck y_{j+1})$, with $\bigcheck y_{j+1}>0$ satisfying $\frac12 \bigcheck y_{j+1}^2=G_1(\bigcheck x_j')$.

By construction, recalling \eqref{ordering}, we have 
\begin{equation*}
\tfrac12 \bigcheck y_j^2 = G_1(\bigcheck x_j)<G_2(\bigcheck x_j)=G_2(\bigcheck x_j')< G_1(\bigcheck x_j') = \tfrac12 \bigcheck y_{j+1}^2,
\end{equation*}
hence $\bigcheck y_{j+1}>\bigcheck y_j$.
Introducing the segment
\begin{equation*}
\bigcheck \LL_j = \{(0,y) \colon \bigcheck y_j < y\leq \bigcheck y_{j+1} \},
\end{equation*}
the set $\bigcheck \CC_j = \bigcheck \AA_{j,1}\cup\bigcheck \AA_{j,2}\cup\bigcheck \AA_{j,3}\cup\bigcheck \LL_j$
is a Jordan curve separating an interior open bounded region  $\bigcheck \II_j$  and an  exterior open unbounded region $\bigcheck \EE_j$.
Note that, since we have assumed \eqref{start-check}, by \eqref{outside} we have
\begin{equation*}
\{x^2+y^2\leq r_0^2+1\} \subseteq \bigcheck \II_1 \subseteq \bigcheck \II_2 \subseteq \ldots \subseteq \bigcheck \II_{m+1}.
\end{equation*}

\begin{figure}[t]
\centering
a)
\begin{tikzpicture}[x=3mm,y=3mm]
\fill [color=interno,line width=0.8pt] plot [smooth] coordinates {(0,3) (2.1,2.6) (3.2,1.8)  (4,0) (3.5,-2) (2,-4)(0,-5) (-2,-5) (-4.4,-3.9) (-6,0) (-5,4) (-3,6) (-0.1,7) (0,7) (0,6.9) (0,3)};
\draw [color=gray,line width=0.8pt] plot [smooth] coordinates {(0,3) (2.1,2.6) (3.2,1.8)  (4,0) (3.5,-2) (2,-4)(0,-5) (-2,-5) (-4.4,-3.9) (-6,0) (-5,4) (-3,6) (-0.1,7) (0,7)};
\filldraw[fill=palla,line width=0.8pt] (0,0) circle (1.5);
\draw[->,line width=0.8pt] (-9,0) -- (8,0);
\draw (8.6,0) node {$x$};
\draw[->,line width=0.8pt] (0,-8) -- (0,10.5);
\draw (0,11.1) node {$y$};
\draw (-1,3) node {$\bigcheck P_j$};
\draw (5,-1) node {$\bigcheck Q_j$};
\draw (-7,-1) node {$\bigcheck Q_j'$};
\draw (1.5,8) node {$\bigcheck P_{j+1}$};
\draw (4.3,3) node {$\bigcheck \AA_{j,1}$};
\draw (-1.5,-6) node {$\bigcheck \AA_{j,2}$};
\draw (-5,6) node {$\bigcheck \AA_{j,3}$};
\draw (1,6) node {$\bigcheck \LL_j$};
\fill (0,3) circle (1.2pt);
\fill (4,0) circle (1.2pt);
\fill (-6,0) circle (1.2pt);
\fill (0,7) circle (1.2pt);
\draw[->,color=black,line width=0.8pt] (-5.5,4.5) -- (-4.5,3.5);
\draw[->,color=black,line width=0.8pt] (2.5,-4.5) -- (1.5,-3.5);
\draw[->,color=black,line width=0.8pt] (-4.7,-4.7) -- (-3.7,-3.7);
\draw[->,color=black,line width=0.8pt] (2.3,3) -- (1.3,2);
\draw[->,color=black,line width=0.8pt] (-1,5) -- (0.5,5);
\end{tikzpicture}
\qquad
b)
\begin{tikzpicture}[x=3mm,y=3mm]
\fill [color=interno,line width=0.8pt] plot [smooth] coordinates {(0,5.6) (3,5) (5.5,2) (6,0) (5,-3) (2.5,-5.5) (0,-6.3) (-6,-4.5) (-8,0) (-6,5.5) (-2,8) (0,8.4)};
\fill [color=white,line width=0.8pt] plot [smooth] coordinates {(0,3) (1,2.8) (2.8,1.7) (3.3,0) (2.8,-1.9) (1.5,-3.3) (0,-3.6) (-2.5,-3.3) (-4,-2.5) (-5.3,0) (-4.3,3.3) (-2,5) (0,5.6)};
\draw [color=gray,line width=0.8pt]plot [smooth] coordinates 
{(0,3) (1,2.8) (2.8,1.7) (3.3,0) (2.8,-1.9) (1.5,-3.3) (0,-3.6) (-2.5,-3.3) (-4,-2.5) (-5.3,0) (-4.3,3.3) (-2,5) (0,5.6) (3,5) (5.5,2) (6,0) (5,-3) (2.5,-5.5) (0,-6.3) (-6,-4.5) (-8,0) (-6,5.5) (-2,8) (0,8.4)};
\draw [dashed, color=FFS-2,line width=0.8pt] plot [smooth] coordinates 
{(-2,1) (-2.5,3) (0,4.3) (2,3.8) (3.8,2) (4.6,0)  (3.5,-3) (2,-4.5) (0,-5) (-4,-4) (-6.6,0) (-5,4.3) (0,7) (2,9)};
\draw[->,color=FFS-2,line width=0.8pt] (-4.3,4.85) -- (-4,5);
\draw[->,color=FFS-2,line width=0.8pt] (3.05,2.9) -- (3.28,2.65);
\draw[->,color=FFS-2,line width=0.8pt] (3.6,-2.8) -- (3.45,-3.1);
\draw[->,color=FFS-2,line width=0.8pt] (-4.9,-3.1) -- (-5.05,-2.9);
\filldraw[fill=palla, line width=0.8pt] (0,0) circle (1.5);
\draw[->,line width=0.8pt] (-9,0) -- (8,0);
\draw (8.6,0) node {$x$};
\draw[->,line width=0.8pt] (0,-8) -- (0,10.5);
\draw (0,11.1) node {$y$};
\draw (-3,1) node {\small $s_1$};
\draw (4,6) node {\small $\tau_0=\sigma_1$};
\draw (-6,8) node {\small $\tau_1=\sigma_2$};
\draw (3,9) node {\small $s_0$};
\draw (-2,-2) node {$\bigcheck \II_1$};
\draw (-7,6) node {$\bigcheck \EE_2$};
\draw[color=black,line width=0.5pt] (0,7) -- (-4,8) ;
\draw[color=black,line width=0.5pt] (0,4.3) -- (2,6);
\fill (-2,1) circle (1.2pt);
\fill (0,4.3) circle (1.2pt);
\fill (0,7) circle (1.2pt);
\fill (2,9) circle (1.2pt);
\end{tikzpicture}
\caption{a) The set $\bigcheck \CC_j$ delimiting the interior regions $\bigcheck\II_j$ and the exterior regions $\bigcheck\EE_j$. The arrows suggest the direction of the vector field associated with \eqref{HS-lambda}, see Lemma~\ref{lem-xx2}. b) A sketch of Proposition~\ref{prop-xx2} in the case $m=1$.}
\label{checkspiral}
\end{figure}

\begin{lemma}\label{lem-xx2}
Let $z=(x,y)$ be a solution of \eqref{HS-lambda} such that, for a certain $t_0$, we have  
$z(t)\in \bigcheck \II_j$ 
in a left neighborhood of $t_0$ and $z(t)\in \bigcheck \EE_j$
in a right neighborhood of $t_0$. Then $z(t_0)\in \bigcheck \LL_j$.
\end{lemma}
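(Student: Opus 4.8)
The plan is to repeat, in mirror image, the argument used for Lemma~\ref{lem-xx}, keeping in mind that $\bigcheck\gamma$ is a \emph{clockwise} spiral: with respect to each arc of $\bigcheck\CC_j$, the regions $\bigcheck\II_j$ and $\bigcheck\EE_j$ now play the roles that $\bighat\EE_j$ and $\bighat\II_j$ played before, so solutions are forced to cross the arcs \emph{into} $\bigcheck\II_j$ rather than out of it. I would start by recalling, as in \eqref{H-der}, that along any solution $(x,y)$ of \eqref{HS-lambda}
\[
\dfrac{\mathrm{d}}{\mathrm{d}t} H_i(x(t),y(t)) = \sqrt\lambda\, y(t)\bigl( g_i(x(t)) - g(t,x(t))\bigr), \qquad i=1,2,
\]
so that, by the strict inequalities in \eqref{who-are-g},
\[
\dfrac{\mathrm{d}}{\mathrm{d}t} H_1(x(t),y(t))<0 \ \text{ if } y(t)>0, \qquad \dfrac{\mathrm{d}}{\mathrm{d}t} H_2(x(t),y(t))<0 \ \text{ if } y(t)<0.
\]

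Next I would exclude each arc $\bigcheck\AA_{j,\kappa}$ as a possible value of $z(t_0)$. Every such arc is a piece of a level line $\{H_i=E\}$ with $E\geq\tfrac12\bigcheck y_1^2>E_0$, so by \eqref{outside} it avoids the ball $\{x^2+y^2\leq r_0^2+1\}$, which is contained in $\bigcheck\II_j$; since the sublevel sets are star-shaped, near the arc the region $\bigcheck\II_j$ is the side $\{H_i<E\}$ and $\bigcheck\EE_j$ the side $\{H_i>E\}$. On $\bigcheck\AA_{j,1}\cup\bigcheck\AA_{j,3}$ one has $y\geq0$ and on $\bigcheck\AA_{j,2}$ one has $y\leq0$, so the displayed signs show that the relevant energy is strictly decreasing along $z$ at every point of these arcs with $y\neq0$; hence $z$ can cross such a point only from $\bigcheck\EE_j$ into $\bigcheck\II_j$, contradicting the hypothesis. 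The only points of the arcs with $y=0$ are the corners $\bigcheck Q_j=(\bigcheck x_j,0)$ and $\bigcheck Q_j'=(\bigcheck x_j',0)$, which lie outside the above ball, so $|\bigcheck x_j|,|\bigcheck x_j'|>\overline r$ and \eqref{old-good-sign} gives $\dot y(t_0)\neq0$; then $z$ crosses the $x$-axis transversally there, and the one-sided monotonicities (of $H_1$ for $y>0$, of $H_2$ for $y<0$) again force $z$ to pass from $\bigcheck\EE_j$ into $\bigcheck\II_j$. Finally, on the $y$-axis the corner $\bigcheck P_{j+1}$ already belongs to $\bigcheck\LL_j$, while if $z(t_0)=\bigcheck P_j=(0,\bigcheck y_j)$ then $\dot x(t_0)=\sqrt\lambda\,\bigcheck y_j>0$ and $\frac{\mathrm d}{\mathrm d t}H_1(z(t))<0$ near $t_0$ (as $y(t)>0$ there), so for $t>t_0$ the point $z(t)$ lies with $x(t)>0$ on the side $\{H_1<\tfrac12\bigcheck y_j^2\}$ of $\bigcheck\AA_{j,1}$, i.e.\ in $\bigcheck\II_j$ --- again against the hypothesis.

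Having excluded all of $\bigcheck\AA_{j,1}\cup\bigcheck\AA_{j,2}\cup\bigcheck\AA_{j,3}$, the hypothesis forces $z(t_0)\in\bigcheck\LL_j$, which is the claim; this is consistent, since on $\bigcheck\LL_j\subseteq\{x=0,\;y>0\}$ one has $\dot x=\sqrt\lambda\,y>0$, so $z$ passes from $\{x<0\}$, which near $\bigcheck\LL_j$ is $\bigcheck\II_j$, into $\{x>0\}$, which there is $\bigcheck\EE_j$. I do not expect a genuine obstacle: the argument is the mirror image of the one for Lemma~\ref{lem-xx}, and the only care needed is in tracking the reversed orientation of the clockwise spiral --- together with the behaviour at the finitely many corners --- which is precisely the bookkeeping condensed in the phrase ``by the properties of the sets $\bigcheck\AA_{j,\kappa}$ we easily conclude''.
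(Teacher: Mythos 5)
Your proof is correct and follows essentially the same approach as the paper: deduce from \eqref{H-der} the strict monotonicity of $H_1$ along trajectories in $\{y>0\}$ and of $H_2$ in $\{y<0\}$, and then observe that these signs force crossings of the arcs $\bigcheck\AA_{j,\kappa}$ from $\bigcheck\EE_j$ into $\bigcheck\II_j$, leaving $\bigcheck\LL_j$ as the only crossing set in the outward direction. The paper compresses the second step into the sentence ``by the properties of the sets $\bigcheck\AA_{j,\kappa}$ we easily conclude,'' whereas you supply the bookkeeping (identifying $\bigcheck\II_j$ with the sublevel side of each arc via \eqref{outside} and star-shapedness, and treating the corner points $\bigcheck Q_j$, $\bigcheck Q_j'$, $\bigcheck P_j$ by transversality plus the one-sided monotonicities) — a welcome expansion, not a genuinely different route.
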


\begin{proof}
From \eqref{H-der} we get
\begin{equation*}
\dfrac{\mathrm{d}}{\mathrm{d}t}H_1(x(t),y(t)) < 0, \; \text{ if $y(t)>0$,} \qquad
\dfrac{\mathrm{d}}{\mathrm{d}t}H_2(x(t),y(t)) < 0, \; \text{ if $y(t)<0$.}
\end{equation*}
Then, by the properties of the sets $\bigcheck \AA_{j,\kappa}$ we easily conclude.
\end{proof}

\begin{proposition}\label{prop-xx2}
Let $z=(x,y)$ be a solution of \eqref{HS-lambda} and $s_0<s_1$ be such that $z(s_0)\in \bigcheck \II_{1}$ and $z(s_1)\in \bigcheck \EE_{m+1}$.
Then, there are $\tau_0$ and $\tau_1$, with $s_0\leq \tau_0<\tau_1\leq s_1$, with the following property: 
$z(\tau_0)\in \bigcheck \LL_{1}$, $z(\tau_1)\in \bigcheck \LL_{m+1}$,
and 
$\Rot(z,[\tau_0,\tau_1])\geq m$.
\end{proposition}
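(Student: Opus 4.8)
The plan is to argue exactly as in the proof of Proposition~\ref{prop-xx}, with the interior and exterior regions interchanged and Lemma~\ref{lem-xx2} playing the role of Lemma~\ref{lem-xx}: here the solution travels \emph{outward}, starting inside the innermost region $\bigcheck\II_1$ and ending outside the outermost one $\bigcheck\II_{m+1}$, so it must leave $\bigcheck\II_1$ first and $\bigcheck\II_{m+1}$ last. Using that $\bigcheck\II_1\subseteq\bigcheck\II_2\subseteq\cdots\subseteq\bigcheck\II_{m+1}$, for each $j\in\{1,\dots,m+1\}$ I would set $\sigma_j:=\sup\{t\in[s_0,s_1]\colon z(t)\in\bigcheck\II_j\}$, which lies in $\mathopen{]}s_0,s_1\mathclose{[}$; by continuity $z(\sigma_j)\in\bigcheck\CC_j$ and $z(t)\in\bigcheck\EE_j$ for every $t\in\mathopen{]}\sigma_j,s_1\mathclose{]}$ — the latter because the vector field of \eqref{HS-lambda} is transverse to $\bigcheck\CC_j$ (by the monotonicity of $H_1,H_2$ used in Lemma~\ref{lem-xx2} along the arcs, and $\dot x=\sqrt\lambda\,y\ne0$ on $\bigcheck\LL_j$), so a crossing of $\bigcheck\CC_j$ after $\sigma_j$ would contradict the definition of $\sigma_j$. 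The nesting immediately gives $\sigma_1\le\sigma_2\le\cdots\le\sigma_{m+1}$.

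Next I would show that $z(\sigma_j)\in\bigcheck\LL_j$. Since $z$ lies in $\bigcheck\EE_j$ just after $\sigma_j$ and — again by transversality — in $\bigcheck\II_j$ on a whole left neighbourhood of $\sigma_j$ (away from the finitely many junction points of $\bigcheck\CC_j$, which are dealt with by inspecting the flow), Lemma~\ref{lem-xx2} applies and yields $z(\sigma_j)\in\bigcheck\LL_j$. Moreover every point of $\bigcheck\LL_{j+1}$ has ordinate strictly larger than $\bigcheck y_{j+1}$, hence lies above all of $\overline{\bigcheck\II_j}$ on the $y$-axis and therefore in $\bigcheck\EE_j$; consequently $z(\sigma_{j+1})\notin\overline{\bigcheck\II_j}$ whereas $z(\sigma_j)\in\bigcheck\CC_j\subseteq\overline{\bigcheck\II_j}$, so $\sigma_j\ne\sigma_{j+1}$ and the chain is strict, $\sigma_1<\sigma_2<\cdots<\sigma_{m+1}$. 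I then put $\tau_0:=\sigma_1$ and $\tau_1:=\sigma_{m+1}$, so that $s_0\le\tau_0<\tau_1\le s_1$, $z(\tau_0)\in\bigcheck\LL_1$ and $z(\tau_1)\in\bigcheck\LL_{m+1}$, as required.

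For the rotation estimate I would invoke Proposition~\ref{theta'pos}. By construction $z(t)\in\bigcheck\EE_1$ for $t\in\mathopen{]}\sigma_1,s_1\mathclose{]}$ and $z(\sigma_1)\in\bigcheck\LL_1\subseteq\bigcheck\CC_1$; since each arc composing the curves $\bigcheck\CC_j$ is a level line of $H_1$ or $H_2$ of energy at least $E_0$, and each $\bigcheck\LL_j$ lies in $\{y>\bigcheck y_1>\sqrt{2E_0}\}$, property \eqref{outside} forces $\bigcheck\CC_1\cup\bigcheck\EE_1\subseteq\{x^2+y^2>r_0^2+1\}$ (recall also $\{x^2+y^2\le r_0^2+1\}\subseteq\bigcheck\II_1$). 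Hence $x^2(t)+y^2(t)>r_0^2$ throughout $[\tau_0,\tau_1]$, so $z$ stays away from the origin, the polar angle $\theta$ of \eqref{modpolcor} is well defined there, and by \eqref{angvel} and Proposition~\ref{theta'pos} one has $\dot\theta(t)=\sqrt\lambda\,\tfrac{y^2+xg(t,x)}{x^2+y^2}\ge\sqrt\lambda\,c>0$; in particular $\theta$ is strictly increasing on $[\tau_0,\tau_1]$. Since every $z(\sigma_j)$ lies on the positive $y$-semiaxis, where $\theta\equiv 0\pmod{2\pi}$, between two consecutive instants $\theta$ increases by a positive integer multiple of $2\pi$; summing over $j=1,\dots,m$ gives $\theta(\tau_1)-\theta(\tau_0)\ge 2\pi m$, that is $\Rot(z,[\tau_0,\tau_1])\ge m$.

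The step I expect to be the most delicate is the extraction and ordering of the instants $\sigma_j$ in the second paragraph — checking that the last-exit time from $\bigcheck\II_j$ falls precisely on $\bigcheck\LL_j$ and that the $\sigma_j$ are strictly increasing — since this carries the geometric content that the one-line proof of Proposition~\ref{prop-xx} leaves implicit; once the solution is confined to $\{x^2+y^2>r_0^2\}$ on $[\tau_0,\tau_1]$, the final rotation count is completely routine.
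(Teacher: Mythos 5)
Your proof is correct and follows essentially the same route as the paper's: extract the instants $\sigma_j$ on $\bigcheck\LL_j$ via Lemma~\ref{lem-xx2}, then bound the rotation from below using the strict positivity of $\dot\theta$ from Proposition~\ref{theta'pos} outside the ball of radius $r_0$. The paper's own proof is a two-line sketch; you have simply made explicit the choice of $\sigma_j$ as last-exit times from $\bigcheck\II_j$, the transversality needed for the strict ordering, and the containment of the trajectory in $\bigcheck\CC_1\cup\bigcheck\EE_1\subseteq\{x^2+y^2>r_0^2+1\}$ on $[\tau_0,\tau_1]$.
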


\begin{proof}
    From Lemma~\ref{lem-xx2}, we deduce the existence of some instants $\sigma_j\in\mathopen{]}s_0,s_1\mathclose{[}$, 
    with $j\in\{1,\dots,m+1\}$, and $\sigma_{j+1}>\sigma_j$ such that 
    $z(\sigma_j)\in \bigcheck\LL_j$.
    Then, recalling Proposition~\ref{theta'pos} we conclude.
\end{proof}

\smallskip

\paragraph{Step~2. The modified problem.}
Recalling the definition of $r_0$ in Proposition~\ref{theta'pos}, we set $\sigma_0:=r_0^2+\frac12 <\sigma_1:=r_0^2+1$.
Let $\varphi\colon\mathopen{[}0,+\infty\mathclose{[}\to\mathopen{[}0,1\mathclose{]}$ be a function of class $\mathcal{C}^{\infty}$ such that $\varphi'\geq0$ and
\begin{equation*}
\varphi(\sigma)=
\begin{cases}
\, 0, & \text{if $\sigma \leq \sigma_0$,}
\\
\, 1, & \text{if $\sigma \geq \sigma_1$.}
\end{cases}
\end{equation*}
Without loss of generality, we can assume that 
\begin{equation}
    \label{choiceG}
    G(t,x) - \tfrac{1}{2} cx^{2}  \geq 0, \quad\text{for every $(t,x)\in[0,T]\times \R$,}
\end{equation}
where $c$ is the constant introduced in Proposition~\ref{theta'pos}.

We consider the modified Hamiltonian function
\begin{equation*}
\widetilde{\mathcal{H}}(t,x,y) = \sqrt{\lambda} \left[ \tfrac{1}{2} (cx^{2} + y^{2})+ \left( G(t,x) - \tfrac{1}{2} cx^{2} \right) \varphi (x^{2}+y^{2})\right] 
\end{equation*}
and the associated Hamiltonian system
\begin{equation}\label{HS-modif}
\begin{cases}
\, \phantom{-}\dot{x} = \sqrt{\lambda}\, y \left[ 1 + 2 \bigl{(} G(t,x) -  \tfrac{1}{2} c x^{2} \bigr{)} \varphi'(x^{2}+y^{2})  \right],
\\
\, -\dot{y} =  \sqrt{\lambda} \left[ cx + \left( g(t,x)-cx \right)\varphi(x^{2}+y^{2}) \right.
\\
\phantom{-\dot{y} =  \sqrt{\lambda} \left[ \right.} \left.
+2x\left( G(t,x) - \tfrac{1}{2}c x^{2} \right) \varphi'(x^{2}+y^{2}) 
\right].
\end{cases}
\end{equation}
We notice that \eqref{HS-modif} corresponds to \eqref{HS-lambda} whenever $x^{2}+y^{2}\geq \sigma_1$ and reduces to the autonomous linear system
\begin{equation*}
\begin{cases}
\, \phantom{-}\dot{x} = \sqrt{\lambda}\, y,
\\
\, -\dot{y} = \sqrt{\lambda}\,cx,
\end{cases}
\end{equation*}
whenever $x^{2}+y^{2} \leq \sigma_0$.

In order to define the successor map associated with \eqref{HS-modif}, we are going to verify that conditions \ref{hp-A-1}--\ref{hp-A-5} are satisfied. We name the functions in \eqref{HS-modif} as
\begin{align*}
f_\lambda(t,x,y)&=
\sqrt{\lambda}\, y \left[ 1 + 2 \bigl{(} G(t,x) -  \tfrac{1}{2} c x^{2} \bigr{)} \varphi'(x^{2}+y^{2})  \right],\\
g_\lambda(t,x,y)&=
\sqrt{\lambda} \left[ cx + \left( g(t,x)-cx \right)\varphi(x^{2}+y^{2}) 
+2x\left( G(t,x) - \tfrac{1}{2}c x^{2} \right) \varphi'(x^{2}+y^{2}) 
\right].
\end{align*}

To show that both \ref{hp-A-1} and \ref{hp-A-2} hold true we prove \eqref{gxfy} for every $(t,x,y)\in[0,T]\times \R^2$.
From the properties of $\varphi$ and Proposition~\ref{theta'pos}, we deduce that \eqref{gxfy} holds when $x^2+y^2\in\mathopen{[}0,\sigma_0\mathclose{]}\cup\mathopen{[}\sigma_1,+\infty\mathclose{[}$. 
It is sufficient to prove it for $x^{2}+y^{2}\in\mathopen{]}\sigma_0,\sigma_1\mathclose{[}$. Using \eqref{choiceG} and the fact that $\varphi(\sigma)\in[0,1]$ for every $\sigma$, we have
\begin{align*}
&g_\lambda(t,x,y)x+f_\lambda(t,x,y)y =
\\
&=\sqrt\lambda \left[
(g(t,x)x+y^{2})\varphi(x^{2}+y^{2})
+(c x^{2} +y^{2}) (1-\varphi(x^{2}+y^{2}))
\right.
\\
&\left. \qquad\quad  + 2 \bigl{(} G(t,x) - \tfrac{1}{2} c x^{2} \bigr{)} \varphi'(x^{2}+y^{2}) (x^{2}+y^{2}) \right]
\\
&\geq \sqrt\lambda\,c (x^2+y^2).
\end{align*}
Concerning condition \ref{hp-A-3}, it is easily verified noticing that $\varphi'=0$ outside a compact set. Finally, 
\ref{hp-A-4} and \ref{hp-A-5} are satisfied
choosing $\delta$ sufficiently small and $D=\sqrt{\sigma_1}$, recalling \eqref{old-good-sign},
since
\begin{equation*}
    y f_\lambda(t,x,y) \geq \sqrt\lambda y^2,\quad 
    x g_\lambda(t,x,y) \geq \sqrt\lambda \bigl{[}cx^2 (1-\varphi(x^2+y^2))+xg(t,x) \varphi(x^2+y^2) \bigr{]}
\end{equation*}
hold everywhere.
Having verified \ref{hp-A-1}--\ref{hp-A-5}, Theorem~\ref{successor_thm} guarantees that the successor map $\SuccMap_\lambda$ associated with system \eqref{HS-modif} and its iterates $\SuccMap^m_\lambda \colon\R\times \mathopen{]}0,+\infty\mathclose{[}\to\R\times \mathopen{]}0,+\infty\mathclose{[}$ are well defined. We will use the notation 
\begin{equation*}
\SuccMap^m_\lambda(t_0,y_0)=(\TT^m_\lambda(t_0,y_0),\YY^m_\lambda(t_0,y_0))
\end{equation*}
for their components. 

\begin{proposition}\label{apb}
For any integer $m\geq 1$, there exists $Y_m>0$ such that 
\begin{equation}\label{Sm}
\SuccMap^m_\lambda(\R\times\mathopen{[}Y_m,+\infty\mathclose{[}) \subseteq \R\times\mathopen{[}\sigma_1,+\infty\mathclose{[}, \quad \text{for every $\lambda>0$.}   
\end{equation}
Moreover, every solution $(x,y)$ of \eqref{HS-modif}, with $x(t_0)=0$, $y(t_0)=y_0\geq Y_m$,
satisfies 
\begin{equation}\label{large-enough}
x(t)^2+y(t)^2>\sigma_1, \quad \text{for every $t\in[t_0,\TT^m_\lambda(t_0,y_0)]$,} 
\end{equation}
hence it
is also a solution of \eqref{HS-lambda} there.
\end{proposition}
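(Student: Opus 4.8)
The plan is to use the guiding curve $\bighat\gamma$ constructed in Step~1 as an impenetrable spiral barrier, together with the rotation estimate of Proposition~\ref{prop-xx}. Fixing $m\ge 1$, I would first pick the starting height $\bighat y_1$ in the construction of $\bighat\gamma$ large enough that the innermost region $\bighat\II_1$ engulfs the closed ball $\{(x,y):x^2+y^2\le\sigma_1^2\}$; this is possible because $H_1,H_2\to+\infty$, so the energy levels carrying the arcs $\bighat\AA_{1,\kappa}$ can be pushed beyond any prescribed ball, and the segment $\bighat\LL_1$ then automatically sits at heights larger than $\sigma_1$. I would then set $Y_m:=\bighat y_{m+2}+1$, so that every point $(0,y_0)$ with $y_0\ge Y_m$ lies in the unbounded exterior region $\bighat\EE_{m+1}$ (and in particular $y_0^2>\sigma_1^2$). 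Recall that, by Theorem~\ref{successor_thm} (whose hypotheses \ref{hp-A-1}--\ref{hp-A-5} have just been verified for \eqref{HS-modif}), the successor map $\SuccMap_\lambda$ of \eqref{HS-modif} is well defined, so any solution $z=(x,y)$ of \eqref{HS-modif} with $z(t_0)=(0,y_0)$, $y_0>0$, is defined up to its $m$-th return time $\TT^m_\lambda(t_0,y_0)$, with $\Rot(z,[t_0,\TT^m_\lambda(t_0,y_0)])=m$; moreover \eqref{HS-modif} coincides with \eqref{HS-lambda} on $\{x^2+y^2\ge\sigma_1\}$, and \ref{hp-A-1} gives $\dot\theta>0$, so $t\mapsto\Rot(z,[t_0,t])$ is strictly increasing.

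The core of the argument is a contradiction. Suppose some solution $z$ of \eqref{HS-modif} with $z(t_0)=(0,y_0)$, $y_0\ge Y_m$, reaches the ball $\{x^2+y^2\le\sigma_1^2\}$ at some time in $[t_0,\TT^m_\lambda(t_0,y_0)]$; let $t^*$ be the first such instant, so $x(t^*)^2+y(t^*)^2=\sigma_1^2$ and $x^2+y^2\ge\sigma_1^2>\sigma_1$ on $[t_0,t^*]$. On that interval $z$ is therefore a genuine solution of \eqref{HS-lambda}, with $z(t_0)\in\bighat\EE_{m+1}$ and $z(t^*)$ on the circle $\{x^2+y^2=\sigma_1^2\}\subseteq\bighat\II_1$. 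Proposition~\ref{prop-xx}, applied with $s_0=t_0$ and $s_1=t^*$, then yields $\tau_0<\tau_1$ in $[t_0,t^*]$ with $z(\tau_0)\in\bighat\LL_{m+1}$, $z(\tau_1)\in\bighat\LL_1$ and $\Rot(z,[\tau_0,\tau_1])\ge m$. Since $y_0>\bighat y_{m+2}$, the starting point $z(t_0)$ does not lie on $\bighat\LL_{m+1}$, hence $\tau_0>t_0$; as $z(t_0)$ and $z(\tau_0)$ both lie on the positive $y$-semiaxis and $\Rot$ is strictly increasing, $\Rot(z,[t_0,\tau_0])\ge 1$. Therefore $\Rot(z,[t_0,t^*])\ge\Rot(z,[t_0,\tau_1])\ge m+1$, contradicting $t^*\le\TT^m_\lambda(t_0,y_0)$, $\Rot(z,[t_0,\TT^m_\lambda(t_0,y_0)])=m$, and monotonicity of $\Rot$.

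It follows that no such $z$ can enter $\{x^2+y^2\le\sigma_1^2\}$ before time $\TT^m_\lambda(t_0,y_0)$; that is, $x(t)^2+y(t)^2>\sigma_1^2>\sigma_1$ for every $t\in[t_0,\TT^m_\lambda(t_0,y_0)]$, which is \eqref{large-enough}, and since the two systems agree on $\{x^2+y^2\ge\sigma_1\}$ the same $z$ solves \eqref{HS-lambda} there. Evaluating at $t=\TT^m_\lambda(t_0,y_0)$, where $x=0$, gives $\YY^m_\lambda(t_0,y_0)^2>\sigma_1^2$, hence $\YY^m_\lambda(t_0,y_0)>\sigma_1$, which is \eqref{Sm}. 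The whole reasoning is uniform in $\lambda>0$ because the barrier $\bighat\gamma$ and the value $Y_m$ are $\lambda$-independent and Proposition~\ref{prop-xx} holds for every $\lambda$.

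The step I expect to be the main obstacle is the rotation bookkeeping that produces the extra full turn: one has to cut the orbit exactly at the first entrance $t^*$ into the protected ball (so that the portion used is still a solution of the \emph{original} system \eqref{HS-lambda}, where Proposition~\ref{prop-xx} applies), and one has to genuinely exploit that the initial point lies strictly above $\bighat\LL_{m+1}$ in order to upgrade the $m$ rotations granted by Proposition~\ref{prop-xx} to the $m+1$ rotations that clash with the $m$-fold iterate of the successor map. A subsidiary point is choosing $\bighat\gamma$ wide enough that $\bighat\II_1$ swallows the ball of radius $\sigma_1$ rather than merely the ball of radius $\sqrt{r_0^2+1}$ used in Step~1, which is what upgrades the crude bound $\YY^m_\lambda>\sqrt{\sigma_1}$ to the stated $\YY^m_\lambda\ge\sigma_1$. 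No uniqueness of Cauchy problems is invoked: everything is carried out for an arbitrary solution $z$.
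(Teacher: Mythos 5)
Your proof is correct and follows essentially the same route as the paper: build the entering spiral $\bighat\gamma$ of Step~1a with $\bighat y_1 > \sigma_1$ (equivalently, so large that $\bighat\II_1$ engulfs the relevant disk around the origin), then invoke Proposition~\ref{prop-xx} together with the observation that the starting point lies strictly above $\bighat\LL_{m+1}$ to extract an extra full rotation and reach a contradiction with $\Rot(z,[t_0,\TT^m_\lambda])=m$. The paper's own proof is stated extremely tersely (it even contains the slip $\bighat\II_1\subseteq\bighat\EE_{m+1}$, which should read $\bighat\II_1\subseteq\bighat\II_{m+1}$), whereas you make explicit two points it leaves implicit: the first-entrance-time cut at $t^*$, which is what legitimizes applying Proposition~\ref{prop-xx} (stated for \eqref{HS-lambda}) to a solution of the modified system \eqref{HS-modif}, and the counting $\Rot(z,[t_0,\tau_0])\ge 1$ from $z(t_0)\notin\bighat\LL_{m+1}$. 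Your cosmetic choice of protecting the ball $\{x^2+y^2\le\sigma_1^2\}$ rather than $\{x^2+y^2\le\sigma_1\}$ is a harmless (in fact slightly stronger) variant that simultaneously delivers $\bighat y_1>\sigma_1$ and \eqref{large-enough}.
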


\begin{proof}
We construct a guiding curve $\bighat\gamma$ performing $m+1$ rotations around the origin as explained in Step~1a by choosing $\bighat y_1>\sigma_1$. We set $Y_m=\bighat y_{m+2}$.
Then, from Proposition~\ref{prop-xx}, we get \eqref{Sm}.
Recalling \eqref{outside} and \eqref{inclusion}, by construction we get
$\{x^2+y^2 \leq \sigma_1\}\subseteq \bighat\II_1 \subseteq \bighat\EE_{m+1}$ and the second assertion holds, as well. 
\end{proof}

\begin{proposition}\label{PB-int}
For any pair $(m,k)$ of positive integers there exists $\lambda_{m,k}>0$ such that
    \begin{equation*}
    \TT^m_\lambda(t_0,Y_m)-t_0>kT, 
    \quad\text{for every $t_0\in\mathbb{R}$ and $\lambda\in\mathopen{]}0,\lambda_{m,k}\mathclose{[}$.}
    \end{equation*}
\end{proposition}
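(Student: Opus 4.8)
\proof[Proof of Proposition~\ref{PB-int}]
The plan is to show that any solution of \eqref{HS-modif} issuing from $(0,Y_{m})$ cannot complete its first $m$ revolutions around the origin too quickly. The point is that, during these revolutions, such a solution stays in a compact region of the phase plane which does \emph{not} depend on $\lambda$, and on any fixed compact region the angular velocity associated with \eqref{HS-modif} (equivalently, with \eqref{HS-lambda} in the regime we shall be in) is of order $\sqrt\lambda$, cf.\ \eqref{angvel}; hence the time needed to turn by $2\pi m$ blows up as $\lambda\to 0^{+}$.

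\emph{Construction of a confining barrier.} Carry out the construction of Step~1b with $m+1$ rotations, but choosing the starting height $\bigcheck y_{1}^{*}:=\max\{Y_{m},\sqrt{2E_{0}}\}+1$. This yields an exiting guiding curve $\bigcheck\gamma^{*}$ with associated bounded open regions $\bigcheck \II_{1}^{*}\subseteq\dots\subseteq\bigcheck \II_{m+1}^{*}$, exterior regions $\bigcheck \EE_{j}^{*}$, and segments $\bigcheck \LL_{j}^{*}$ lying on the positive $y$-axis; since $0<Y_{m}<\bigcheck y_{1}^{*}$ and $\{x^{2}+y^{2}\le r_{0}^{2}+1\}\subseteq \bigcheck \II_{1}^{*}$, one has $(0,Y_{m})\in\bigcheck \II_{1}^{*}$, and the whole construction depends only on $m$ (through $Y_{m}$) and on $g_{1},g_{2},E_{0}$, hence not on $\lambda$ nor on $t_{0}$. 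Let $z=(x,y)$ be any solution of \eqref{HS-modif} with $x(t_{0})=0$, $y(t_{0})=Y_{m}$. By Proposition~\ref{apb}, $z$ solves \eqref{HS-lambda} on $[t_{0},\TT^{m}_{\lambda}(t_{0},Y_{m})]$ and satisfies $x(t)^{2}+y(t)^{2}>\sigma_{1}$ there; in particular, by Proposition~\ref{theta'pos}, $\dot\theta>0$ along $z$ on this interval, so $\Rot(z,\cdot)$ is strictly increasing on it and $\Rot(z,[t_{0},\TT^{m}_{\lambda}(t_{0},Y_{m})])=m$. I claim that $z(t)\in\overline{\bigcheck \II_{m+1}^{*}}$ for every $t\in[t_{0},\TT^{m}_{\lambda}(t_{0},Y_{m}))$. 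Otherwise $z(\bar t)\in \bigcheck \EE_{m+1}^{*}$ for some such $\bar t$; since $z(t_{0})\in \bigcheck \II_{1}^{*}$, the argument of Proposition~\ref{prop-xx2} applied to $\bigcheck\gamma^{*}$ (valid because all the arcs of $\bigcheck\gamma^{*}$ lie in $\{x^{2}+y^{2}>\sigma_{1}\}$, where \eqref{HS-modif} and \eqref{HS-lambda} coincide) provides $t_{0}\le\tau_{0}<\tau_{1}\le\bar t$ with $\Rot(z,[\tau_{0},\tau_{1}])\ge m$, hence $\Rot(z,[t_{0},\bar t])\ge m$; but $\bar t<\TT^{m}_{\lambda}(t_{0},Y_{m})$ and the strict monotonicity of $\Rot(z,\cdot)$ force $\Rot(z,[t_{0},\bar t])<m$, a contradiction.

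\emph{Conclusion.} Set $K_{m}:=\overline{\bigcheck \II_{m+1}^{*}}\cap\{x^{2}+y^{2}\ge\sigma_{1}\}$, a compact set independent of $\lambda$ and $t_{0}$, so that $z(t)\in K_{m}$ for every $t\in[t_{0},\TT^{m}_{\lambda}(t_{0},Y_{m}))$. Let
\begin{equation*}
M_{m}:=\max\bigl\{\,|y^{2}+x\,g(t,x)|\ :\ (x,y)\in K_{m},\ t\in[0,T]\,\bigr\}<+\infty,
\end{equation*}
finite by continuity and $T$-periodicity of $g$ and compactness of $K_{m}$. Along $z$ we have, using \eqref{angvel} and $x^{2}+y^{2}\ge\sigma_{1}\ge 1$, the bound $0<\dot\theta(t)\le\sqrt\lambda\,M_{m}$ for every $t\in[t_{0},\TT^{m}_{\lambda}(t_{0},Y_{m}))$, whence
\begin{equation*}
2\pi m=\theta(\TT^{m}_{\lambda}(t_{0},Y_{m}))-\theta(t_{0})=\int_{t_{0}}^{\TT^{m}_{\lambda}(t_{0},Y_{m})}\dot\theta(t)\,\mathrm{d}t\le \sqrt\lambda\,M_{m}\,\bigl(\TT^{m}_{\lambda}(t_{0},Y_{m})-t_{0}\bigr).
\end{equation*}
Thus $\TT^{m}_{\lambda}(t_{0},Y_{m})-t_{0}\ge 2\pi m/(\sqrt\lambda\,M_{m})$, and it suffices to take $\lambda_{m,k}:=\bigl(2\pi m/(kTM_{m})\bigr)^{2}$: for every $\lambda\in\mathopen{]}0,\lambda_{m,k}\mathclose{[}$ and every $t_{0}\in\mathbb{R}$ one gets $\TT^{m}_{\lambda}(t_{0},Y_{m})-t_{0}>kT$. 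Since $M_{m}$ does not depend on $t_{0}$, the estimate is uniform in $t_{0}$, and since the whole argument applies to every solution issuing from $(0,Y_{m})$, it covers every branch of the (possibly multivalued) iterated successor map, in the sense of Remark~\ref{rem-multivalued}.

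The main obstacle is the confinement step. The trivial inequality $\dot\theta\ge\sqrt\lambda\,c$ only controls the rotation speed from below; because of the superlinear term $x\,g(t,x)$, the angular velocity could a priori become arbitrarily large if the solution wandered far from the origin, and then the time for $m$ revolutions would not blow up. The exiting guiding curve $\bigcheck\gamma^{*}$ — whose geometry is built from $g_{1},g_{2}$ and is therefore $\lambda$-free — is precisely what keeps the solution in a fixed compact set during its first $m$ revolutions, while Proposition~\ref{apb} is what ensures we remain in the region where \eqref{HS-modif} reduces to \eqref{HS-lambda}, so that Proposition~\ref{prop-xx2} is available. Once confinement is in place, the remaining angular-velocity integration is elementary.
\endproof
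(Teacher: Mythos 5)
Your proof is correct and follows essentially the same route as the paper's: build the exiting guiding curve $\bigcheck\gamma$ of Step~1b with a starting height above $Y_m$, use Proposition~\ref{prop-xx2} (together with Proposition~\ref{apb} to reduce to \eqref{HS-lambda}) to confine the solution in a $\lambda$-independent compact annular region, bound the angular velocity there by a constant times $\sqrt\lambda$ via \eqref{angvel}, and integrate to obtain a lower bound of order $1/\sqrt\lambda$ on the time for $m$ revolutions. The only difference is that you spell out the confinement step as an explicit contradiction argument where the paper states it briefly; the logic is the same.
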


\begin{proof}
We first construct the guiding curve $\bigcheck\gamma$ performing $m+1$ rotations around the origin as explained in Step~1b, by choosing $\bigcheck y_1>Y_m$. 
Then, we define the compact set $\mathcal{K}_m$ as the closure of ${\bigcheck\II}_{m+1}\cap \{x^2+y^2 \geq\sigma_1\}$.
From Proposition~\ref{prop-xx2} we deduce that every solution $(x,y)$ of \eqref{HS-modif}, with $x(t_0)=0$, $y(t_0)= Y_m$, satisfies $(x(t),y(t))\in\mathcal{K}_m$ for every $t\in[t_0, \TT^m_\lambda(t_0,Y_m)]$.
So, introducing modified polar coordinates as in \eqref{modpolcor}, recalling the expression for the angular velocity \eqref{angvel}, we deduce the existence of $\Theta_m>0$ such that that
\begin{equation*}
|\theta'(t)| \leq \sqrt{\lambda} \,\Theta_m,
\quad \text{for every $t\in[t_0, \TT^m_\lambda(t_0,Y_m)]$.}
\end{equation*}
Hence, $\TT^m_\lambda(t_0,Y_m) - t_0 \geq \frac{2\pi m}{\sqrt{\lambda} \Theta_m}$, and we conclude by setting $\lambda_{m,k}=\bigl(\frac{2\pi m}{kT\Theta_m}\bigr)^{\!2}$.
\end{proof}

\begin{proposition}\label{PB-est}
For any pair $(m,k)$ of positive integers and any $\lambda>0$ there exists $Z_{m,k,\lambda}>Y_m$ such that
\begin{equation*}
\TT^m_\lambda(t_0,y_0)-t_0<kT, 
\quad\text{for every $t_0\in\mathbb{R}$ and $y_0\geq Z_{m,k,\lambda}$.}
\end{equation*}
\end{proposition}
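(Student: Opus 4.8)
The plan is to show that, when $y_0$ is large, the solution issued from $(0,y_0)$ performs $m$ complete counterclockwise turns around the origin in less than the time $kT$; since $\dot\theta>0$ while the solution stays away from the origin, this gives $\TT^m_\lambda(t_0,y_0)-t_0<kT$. We shall take $Z_{m,k,\lambda}\ge Y_m$, so that by Proposition~\ref{apb} the relevant portion of the solution lies in $\{x^2+y^2>\sigma_1\}$, where \eqref{HS-modif} reduces to \eqref{HS-lambda}; in particular the energies $H_i(x,y)=\tfrac12 y^2+G_i(x)$, $i=1,2$, from Step~1 are available, and by \eqref{H-der} along any solution $H_2$ increases and $H_1$ decreases while $y>0$, with the roles reversed while $y<0$.

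The heart of the matter is an estimate for the duration of a single turn in terms of its starting height on the positive $y$-axis. Let $z$ solve \eqref{HS-lambda} with $z(s)=(0,y_*)$, $y_*>0$, and follow it, as in Theorem~\ref{successor_thm}, through the four quadrants until it returns to the positive $y$-axis at a point $(0,y_{**})$. The transitions between consecutive quadrant arcs occur on the coordinate axes; chaining the monotonicities of $H_1$ and $H_2$ across them produces, on each quadrant arc, a lower bound of the form $y(t)^2\ge 2\bigl(G_i(X)-G_i(x(t))\bigr)\ge 0$, where $X$ is the abscissa of the turning point reached on that arc and $i\in\{1,2\}$ is chosen according to the half-plane. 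On each quadrant arc $x$ is strictly monotone and $\dot x=\sqrt\lambda\,y$, so $\mathrm{d}t=\mathrm{d}x/(\sqrt\lambda\,y)$; hence the duration of an arc whose turning point has abscissa $X>0$ (the other cases being symmetric) is at most $\tfrac{1}{\sqrt\lambda}\int_0^{X}\mathrm{d}x/\sqrt{2(G_i(X)-G_i(x))}$, up to handling separately the bounded set $|x|\le\overline r$, on which $g_1,g_2$ need not be monotone. This integral is the time required by the autonomous superlinear equation $\ddot x+g_i(x)=0$ to travel from the $y$-axis to the turning point of amplitude $X$; splitting it at $x=X/2$ and using $G_i(X)-G_i(x)\ge\tfrac12 G_i(X)$ for $x\le X/2$ together with $G_i(X)-G_i(x)\ge (X-x)\,g_i(X/2)$ for $x\ge X/2$, one sees, since $G_i(X)/X^2\to+\infty$, that it tends to $0$ as $X\to+\infty$.

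By \eqref{ordering} together with the same monotonicities, all the turning-point abscissae met along the turn lie between the $G_2^{-1}$- and $G_1^{-1}$-images of quantities comparable to $y_*^2$, hence all tend to $+\infty$ with $y_*$; it follows that the duration of one turn is at most some $\pi(y_*)$ with $\pi(y_*)\to 0$ as $y_*\to+\infty$. The same inequalities yield $y_{**}^2\ge\mu(y_*^2)$ for an increasing function $\mu$ with $\mu(r)\to+\infty$, and that $x^2+y^2$ remains above a quantity that tends to $+\infty$ with $y_*$ during the whole turn. Iterating: writing $r_0=y_0^2$ and letting $r_j$ be the square of the height at the start of the $(j{+}1)$-st turn, one has $r_j\ge\mu^{(j)}(r_0)$, so $r_j\to+\infty$ as $y_0\to+\infty$ for each of the finitely many indices $j\le m-1$, and therefore the first $m$ turns are completed within a time at most $\sum_{j=0}^{m-1}\pi\bigl(\sqrt{\mu^{(j)}(r_0)}\bigr)\to 0$ as $y_0\to+\infty$. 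Choosing $Z_{m,k,\lambda}\ge Y_m$ so large that this sum is $<kT$, and that $x^2+y^2>\sigma_1$ is maintained so that \eqref{HS-lambda} and \eqref{HS-modif} agree and Proposition~\ref{apb} is in force, proves the claim; as all the estimates involve only the $t$-independent functions $g_1,g_2$, everything is uniform in $t_0$.

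The main obstacle is the single-turn estimate: one needs a lower bound on $|y|$ valid along the whole trajectory, which is delicate near the turning points on the $x$-axis where $y$ vanishes, and it is precisely the sign-definite behaviour of $H_1$ and $H_2$ on each half-plane that supplies it, reducing each quadrant arc to a traversal time of an autonomous superlinear equation, whose smallness for large amplitude is elementary. The rest is bookkeeping, namely checking that the re-entry heights stay large enough for all $m$ turns to remain fast.
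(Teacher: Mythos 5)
Your strategy is genuinely different from the paper's and, in outline, sound; but the key single-turn time estimate as written contains an unjustified step. The paper does not estimate traversal times quadrant by quadrant in the $x$-variable. Instead it works entirely in the modified polar coordinates: from \eqref{g-superlinear} it extracts, for any $M>0$, a radius $R_M$ such that
\begin{equation}
\dot\theta \;\ge\; \tfrac12\sqrt\lambda\,\bigl(\cos^2\theta + M\sin^2\theta\bigr)
\quad\text{whenever } x^2+y^2\ge R_M^2,
\end{equation}
and then integrates this inequality (substituting $s=\tan\theta$) to get a bound $\frac{4m\pi}{\sqrt{\lambda M}}$ on the time needed to perform $m$ rotations outside $B_{R_M}$; choosing $M$ large makes this $<kT$, and a single entering spiral $\bighat\gamma$ with $\bighat y_1>R_M$ keeps the trajectory outside $B_{R_M}$. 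This is shorter and avoids per-arc integrals and the iteration over turns altogether: the time bound does not even depend on $y_0$, only on the fact that the trajectory stays outside $B_{R_M}$.

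The gap in your argument lies in the estimate of the traversal integral. You split at $x=X/2$ and invoke $G_i(X)-G_i(x)\ge\tfrac12 G_i(X)$ for $x\le X/2$ and $G_i(X)-G_i(x)\ge(X-x)\,g_i(X/2)$ for $x\ge X/2$. The first requires $G_i(X/2)\le\tfrac12 G_i(X)$, which follows from convexity of $G_i$ (equivalently, monotonicity of $g_i$), and the second again requires $g_i$ nondecreasing on $[X/2,X]$. Neither is granted by the hypotheses: condition \eqref{g-superlinear} controls $g(t,x)/x$ only asymptotically and says nothing about monotonicity, and the functions $g_1,g_2$ built in \eqref{who-are-g} can oscillate. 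The conclusion (traversal time $\to 0$ as $X\to\infty$) is still true and provable, but by a different splitting: fix $\mu>0$ and pick $x_\mu$ with $g_i(s)\ge\mu s$ for $s\ge x_\mu$; on $[0,x_\mu]$ bound the integrand using $G_i(X)-G_i(x)\ge G_i(X)-\max_{[0,x_\mu]}G_i$, which is $\ge\tfrac12 G_i(X)$ for $X$ large, and on $[x_\mu,X]$ use $G_i(X)-G_i(x)=\int_x^X g_i\ge\tfrac{\mu}{2}(X^2-x^2)$, giving a contribution $\le\tfrac{\pi}{2\sqrt\mu}$. Sending $\mu\to\infty$ after $X\to\infty$ yields the claim. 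With this repair your quadrant-arc approach goes through; the remaining bookkeeping (re-entry heights growing, $x^2+y^2$ staying above $\sigma_1$ so that \eqref{HS-modif} coincides with \eqref{HS-lambda}, uniformity in $t_0$) is sound, if noticeably heavier than the paper's one-line polar integral.
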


\begin{proof}
By the superlinear assumption \eqref{g-superlinear}, recalling the expression for the angular velocity \eqref{angvel}, for every $M>0$ we can find a radius $R_M>0$ such that any solution $(x,y)$ of \eqref{HS-modif} satisfies
\begin{equation*}
\theta'(t) \geq \frac12 \sqrt\lambda [\cos^2(\theta(t)) + M \sin^2(\theta(t))],
\quad \text{when $x^2(t)+y^2(t)\geq R_M^2$.}
\end{equation*}
Let us consider a solution $z=(x,y)$ of \eqref{HS-modif} such that $\Rot(z,[t_0,t_1])=m$ and $x^2(t)+y^2(t)\geq R_M^2$ for every $t\in[t_0,t_1]$. Then, integration in the above inequality, with the change of variable $s=\tan \theta$, gives us
\begin{align*}
t_1-t_0 &\leq \dfrac{2}{\sqrt\lambda} \int_0^{2\pi m} \dfrac{\mathrm{d}\theta}{\cos^2\theta + M \sin^2\theta}
= \frac{8m}{\sqrt\lambda} \int_0^{\frac{\pi}{2}} \dfrac{\mathrm{d}\theta}{\cos^2\theta + M \sin^2\theta}\\
& = \frac{8m}{\sqrt\lambda}
\int_0^{+\infty} \dfrac{\mathrm{d}s}{1+Ms^2}= \frac{1}{\sqrt M} \, \frac{4m\pi}{\sqrt\lambda} < kT,
\end{align*}
where the last inequality holds for $M$ large enough.
In order to well define the value $Z_{m,k,\lambda}$ we introduce a new spiral $\bighat\gamma$ as in Step~1a, setting $\bighat y_1>R_M$ such that $\{x^2+y^2 \leq R_M^2\}\subseteq \bighat\II_1$. Then, we fix $Z_{m,k,\lambda}>\max\{\bighat y_{m+2}, Y_m\}$. The above reasoning concludes the proof.
\end{proof}

Summing up, we have proved that
for every positive integers $m$ and $k$, there exist $Y_m>0$ and $\lambda_{m,k}>0$, such that for every $\lambda \in \mathopen{]}0,\lambda_{m,k}\mathclose{[}$ there is $Z_{m,k,\lambda}>Y_m$ satisfying
\begin{equation*}
\TT^m_\lambda(t_0,Y_m) -t_0 > kT > \TT^m_\lambda(t_0,Z_{m,k,\lambda})-t_0,
\quad\text{for every $t_0\in\R$.}
\end{equation*}
Hence, applying Theorem~\ref{PB-thm}, for every couple $(m,k)$ of positive integers and every $\lambda\in\mathopen{]}0,\lambda_{m,k}\mathclose{[}$ there are two distinct $kT$-periodic solutions of \eqref{HS-modif}
satisfying 
$\Rot(z, [0,kT])=m$. 
By Proposition~\ref{apb}, such solutions are indeed solutions of \eqref{HS-lambda},  hence the corresponding $x$-components solve \eqref{lambda-eq} and have exactly $2m$ simple zeros in the period interval $\mathopen{[}0,kT\mathclose{[}$. 
The proof of Theorem~\ref{THM_lambda} is thus completed under the assumption that $\partial_t g$ exists and is continuous.

\smallskip

Let us now treat the general case. We first need the following estimate.

\begin{proposition}\label{apb-est}
For any pair $(m,k)$ of positive integers and any $\lambda>0$ there exists $\Sigma_1>0$ such that, for every $t_0\in\R$ and every $y_0\in[Y_m,Z_{m,k,\lambda}]$, all the solutions $z=(x,y)$ of \eqref{HS-modif} with $x(t_0)=0$ and $y(t_0)=y_0$ satisfy
\begin{equation*}
    x^2(t)+y^2(t)< \Sigma_1, \quad \text{for every $t \in [t_0, \TT^m_\lambda(t_0,y_0)]$.}
\end{equation*}
\end{proposition}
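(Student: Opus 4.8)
The plan is to reuse, almost word for word, the confinement mechanism employed in the proof of Proposition~\ref{PB-int}, the only change being that the guiding curve is now started \emph{above} the whole strip of admissible initial data. Concretely, I would first apply the construction of Step~1b to produce an exiting guiding curve $\bigcheck\gamma$ performing $m+1$ clockwise rotations, choosing its starting height $\bigcheck y_1 > Z_{m,k,\lambda}$; this yields the nested Jordan regions $\bigcheck\II_1 \subseteq \bigcheck\II_2 \subseteq \cdots \subseteq \bigcheck\II_{m+1}$. Since $\overline{\bigcheck\II_{m+1}}$ is compact, I would then set
\[
\Sigma_1 := 1 + \max\bigl\{\, x^2+y^2 \colon (x,y)\in\overline{\bigcheck\II_{m+1}} \,\bigr\},
\]
and observe that this quantity depends only on $m$, $k$, $\lambda$ (the latter through $Z_{m,k,\lambda}$), and in particular neither on $t_0$, nor on $y_0$, nor on the chosen solution --- which is exactly the uniformity the statement asks for.

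Next I would fix $t_0\in\R$, $y_0\in[Y_m,Z_{m,k,\lambda}]$, and an arbitrary solution $z=(x,y)$ of \eqref{HS-modif} with $x(t_0)=0$, $y(t_0)=y_0$; by Theorem~\ref{successor_thm} it is defined on $[t_0,\TT^m_\lambda(t_0,y_0)]$. Since $y_0\leq Z_{m,k,\lambda}<\bigcheck y_1$, the initial point $(0,y_0)$ lies inside $\bigcheck\II_1$. By the very definition of the successor map and of its iterates, $\Rot(z,[t_0,\TT^m_\lambda(t_0,y_0)])=m$; moreover the angular velocity $\dot\theta$ is strictly positive along $z$ (recall that \ref{hp-A-1} has been verified for \eqref{HS-modif} in Step~2, and that $z$ never reaches the origin, which near it is a linear center for \eqref{HS-modif}), so that $\Rot(z,[t_0,t])<m$ for every $t\in[t_0,\TT^m_\lambda(t_0,y_0))$. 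The intuitive point is that, by Lemma~\ref{lem-xx2}, a solution can escape $\bigcheck\II_j$ into $\bigcheck\EE_j$ only across $\bigcheck\LL_j$, which lies on the positive $y$-axis, hence only once per turn; so $m$ turns cannot carry $z$ out of $\bigcheck\II_{m+1}$. Formally, applying Proposition~\ref{prop-xx2} in contrapositive form --- just as in the proof of Proposition~\ref{PB-int}, and using that $\bigcheck\gamma$ lies in the region $\{x^2+y^2>\sigma_1\}$ where \eqref{HS-modif} coincides with \eqref{HS-lambda} --- the conditions $z(t_0)\in\bigcheck\II_1$ and $\Rot(z,[t_0,t])<m$ rule out $z(t)\in\bigcheck\EE_{m+1}$; since $\bigcheck\CC_{m+1}$ bounds $\bigcheck\II_{m+1}$ from inside and $\bigcheck\EE_{m+1}$ from outside, this means $z(t)\in\overline{\bigcheck\II_{m+1}}$ for every $t\in[t_0,\TT^m_\lambda(t_0,y_0))$, and by continuity also at $t=\TT^m_\lambda(t_0,y_0)$. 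Hence $x^2(t)+y^2(t)\leq\max_{\overline{\bigcheck\II_{m+1}}}(x^2+y^2)<\Sigma_1$ on the whole of $[t_0,\TT^m_\lambda(t_0,y_0)]$, as claimed. Incidentally, the lower bound $y_0\geq Y_m$ plays no role, and the argument works verbatim for every $y_0\in\mathopen{]}0,Z_{m,k,\lambda}\mathclose{]}$.

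I do not expect any genuine obstacle here: this proposition is a twin of Proposition~\ref{PB-int}, and the Step~1 machinery was set up precisely so as to make such a priori bounds immediate. The points demanding a little attention are the free choice of $\bigcheck y_1$ (legitimate because, in Step~1b, the starting height is an arbitrary parameter subject only to $\bigcheck y_1 > \sqrt{2E_0}$), the correct bookkeeping of the rotation number --- i.e., that the return time $\TT^m_\lambda(t_0,y_0)$ corresponds to a total angular increment of exactly $2\pi m$, so that $\Rot(z,[t_0,t])$ stays strictly below $m$ up to that instant --- and the handling of the boundary situations (a solution grazing $\bigcheck\CC_{m+1}$, or the openness of $\bigcheck\EE_{m+1}$), which are absorbed by passing to the closure $\overline{\bigcheck\II_{m+1}}$ and by the harmless ``$+1$'' in the definition of $\Sigma_1$.
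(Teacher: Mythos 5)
Your proof is correct and takes essentially the same route as the paper's (very terse) argument: construct the clockwise guiding spiral $\bigcheck\gamma$ from Step~1b with $\bigcheck y_1>Z_{m,k,\lambda}$, then invoke Proposition~\ref{prop-xx2} (via Lemma~\ref{lem-xx2}) to confine the trajectory to $\overline{\bigcheck\II_{m+1}}$ for the duration of $m$ revolutions, and take $\Sigma_1$ bounding that compact set. The extra care you take with the rotation-number bookkeeping and the boundary case $t=\TT^m_\lambda(t_0,y_0)$ is a sound elaboration of what the paper leaves implicit; note that the paper's proof as printed reads ``$\bigcheck\EE_{m+1}\subseteq\{x^2+y^2<\Sigma_1\}$'', which is an evident slip for the complement $\R^2\setminus\bigcheck\EE_{m+1}=\overline{\bigcheck\II_{m+1}}$, exactly the set you use.
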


\begin{proof}
    We need to construct a new spiral $\bigcheck\gamma$ as in Step~1b performing $m+1$ clockwise rotations around the origin, choosing $\bigcheck y_1> Z_{m,k,\lambda}$. Then, we select $\Sigma_1$ so that $\bigcheck\EE_{m+1} \subseteq \{x^2+y^2<\Sigma_1\}$.
\end{proof}

Now, we consider a sequence of mollifiers $\rho_n \colon\R\to \R$ and define $h_n \colon \R\times \R\to \R$ as
\begin{equation*}
h_n(t,x) = \int_\R g(s,x) \rho_n(t-s)\, \mathrm{d}s = \int_\R g(t-\sigma,x) \rho_n(\sigma)\, \mathrm{d}\sigma,
\end{equation*}
which converges uniformly on  $[0,T]\times [-\sqrt{\Sigma_1},\sqrt{\Sigma_1}]$ to $g(t,x)$.
Notice that these functions $h_n(t,x)$ are $T$-periodic in $t$ and $\partial_t h_n$ exist and are continuous. In particular, for $n$ large enough, \eqref{who-are-g} holds replacing $g(t,x)$ with $h_n(t,x)$, provided that $|x|\leq \sqrt{\Sigma_1}$. Therefore, all the above construction can be replicated for system
\begin{equation}\label{spqr}
\begin{cases}
\phantom{-}x' = \sqrt\lambda\, y,
\\
-y' = \sqrt\lambda \, h_n(t,x),
\end{cases}
\end{equation}
thus finding the same constants in the above propositions, independently of $n$. Given any pair $(m,k)$ of positive integers, we so find two $kT$-periodic solutions for \eqref{spqr}, for every $\lambda \in \mathopen{]}0,\lambda_{m,k}\mathclose{]}$, having rotation number $m$.
By a standard argument, involving the use of the Ascoli--Arzel\`a theorem, we recover the existence of a $kT$-periodic solution for \eqref{HS-lambda} having exactly $2m$ simple zeros in the interval $\mathopen{[}0, kT \mathclose{[}$ as the limit of a subsequence of the $kT$-periodic solutions found for \eqref{spqr}. Notice that the multiplicity might get lost in the limiting process.

The proof of Theorem~\ref{THM_lambda} is thus completed.
\qed

\section*{Acknowledgements}

The first author acknowledges the support of INdAM-GNAMPA project ``Analisi qualitativa di problemi differenziali non lineari'' and PRIN Project 20227HX33Z ``Pattern formation in nonlinear phenomena''.
The second and third authors acknowledge the support of PRIN Project 2022ZXZTN2 ``Nonlinear differential problems with applications to real phenomena''.

The research contained in this paper was carried out within the framework of DEG1 -- Differential Equations Group Of North-East.

\bibliographystyle{elsart-num-sort.bst} 
\bibliography{FFS-biblio}

\end{document}